\newtheorem{theorem}{Theorem}[section]
\newtheorem{corollary}[theorem]{Corollary}
\newtheorem{lemma}[theorem]{Lemma}
\newtheorem{prop}[theorem]{Proposition}
\newtheorem{problem}[theorem]{Problem}
\newtheorem{remark}[theorem]{Remark}
\newtheorem{conjecture}[theorem]{Conjecture}
\newtheorem*{ack}{Acknowledgements}
\newcommand{\Aut}{\operatorname{Aut}}
\newcommand{\Inn}{\operatorname{Inn}}
\newcommand{\Za}{\operatorname{Z} }
\newcommand{\id}{\operatorname{id} }
\begin{document}

\title[Structure of twin groups]{Structural aspects of twin and pure twin groups}

\author{Valeriy BARDAKOV}
\address{Sobolev Institute of Mathematics, Novosibirsk, 630090, Russia. Novosibirsk State University and Agrarian university, Novosibirsk, Russia.}
\email{bardakov@math.nsc.ru}

\author{Mahender SINGH}
\address{Department of Mathematical Sciences, Indian Institute of Science Education and Research (IISER) Mohali, Sector 81, S. A. S. Nagar, P. O. Manauli, Punjab 140306, India.}
\email{mahender@iisermohali.ac.in}

\author{Andrei VESNIN}
\address{Tomsk State University, Tomsk, 634050, Russia, and Sobolev Institute of Mathematics, Novosibirsk, 630090, Russia.}
\email{vesnin@math.nsc.ru}

\subjclass[2010]{Primary 57M27; Secondary 57M25}

\keywords{Coxeter group, doodle, Eilenberg--Maclane space, free group, hyperbolic plane, pure twin group, twin group}

\begin{abstract}
The twin group $T_n$ is a Coxeter group generated by $n-1$ involutions and the pure twin group $PT_n$ is the kernel of the natural surjection of $T_n$ onto the symmetric group on $n$ letters. In this paper, we investigate structural aspects of twin and pure twin groups.  We prove that the twin group $T_n$ decomposes into a  free product with amalgamation for $n>4$. It is shown that the pure twin group $PT_n$ is free for $n=3,4$, and not free for $n\ge 6$. We determine a generating set for $PT_n$, and give an upper bound for its rank. We also construct a natural faithful representation of $T_4$ into $\Aut(F_7)$. In the end, we propose virtual and welded analogues of these groups and some directions for future work.
\end{abstract}

\maketitle

\section{Introduction}
Twin groups $T_n$, $n \ge 2$, are a class of Coxeter groups generated by $n-1$ involutions. These groups first showed appearance in the works of Shabat and Voevodsky \cite{Shabat-Voevodsky, Voevodsky} under the name Grothendieck cartographical groups. Later, these groups appeared in the work of Khovanov \cite{Kh96} on real $K(\pi, 1)$ arrangements, who called them twin groups,  and investigated them further in~\cite{Kh97}.

\par
Twin groups have a geometrical interpretation  \cite{Kh96, Kh97} similar to the one for classical braid groups. Consider two parallel lines, say $y = 0$ and $y = 1$, on the plane $\mathbb{R}^2$ with $n$ marked points on each line. Consider the set of configurations of $n$ arcs in the strip $\mathbb{R} \times [0,1]$ connecting $n$ marked points on line $y=1$ to those on the line $y=0$ such that each arc is monotonic and no three arcs have a point in common. Two such configurations are called equivalent if one can be deformed into the other by a homotopy of arcs keeping the end points of the arcs fixed throughout the homotopy, and an equivalence class is called a \textit{twin}. The product of two twins on $n$ arcs can be defined by placing one on top of the other and rescaling the interval to unit length.  This operation turns  the set of all twins on $n$ arcs into a group which is isomorphic to the group $T_n$. The generators $s_i$ are geometrically represented by configurations of the type as shown in Figure~\ref{fig1}.
 \begin{figure}[!ht]
 \begin{center}
\includegraphics[height=2.7cm, width=7cm]{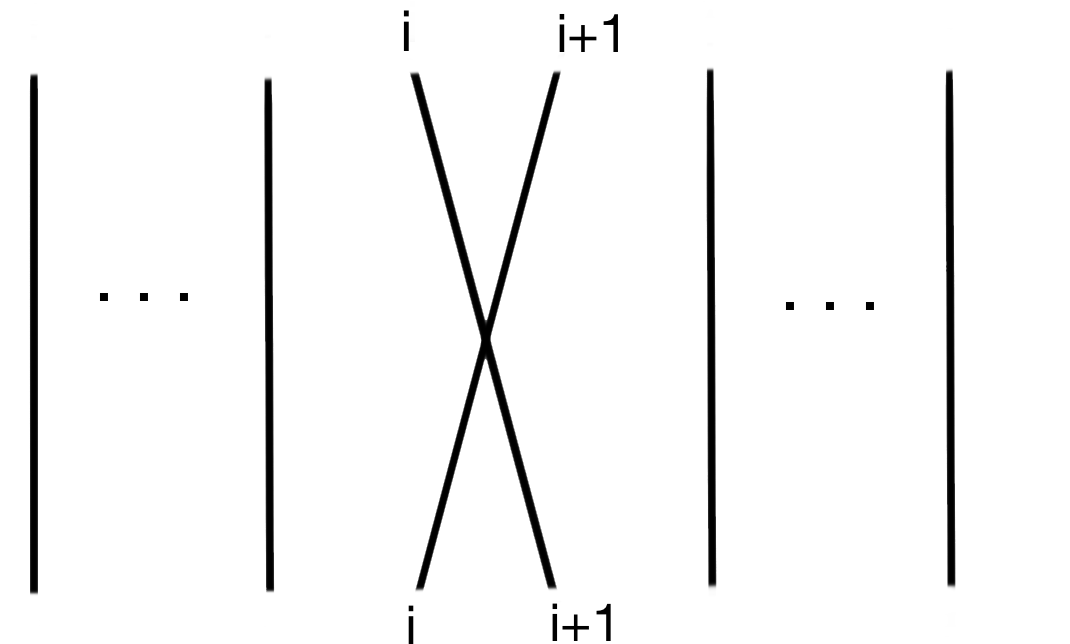}
\end{center}
\caption{The generator $s_i$.} \label{fig1}
\end{figure}

The closure of a twin is defined analogous to closure of a braid.  A doodle on a closed surface is a collection of finitely many piecewise-linear closed curves without triple intersections. The concept first appeared in the work of Fenn and Taylor \cite{FT}. The idea is similar to that of a classical link.  It is evident that closure of a twin gives a doodle. In \cite{Kh97}, Khovanov proved that every oriented doodle on a 2-sphere is the closure of a twin. Also, to each doodle, he associated a group called the fundamental group of the doodle, which plays the role of the fundamental group of a link complement. These constructions have been generalised by Bartholomew-Fenn-Kamada-Kamada \cite{BFKK}, wherein they consider collection of immersed circles in closed oriented surfaces of arbitrary genus. A Markov theorem for doodles on the 2-sphere has been established recently by Gotin \cite{Gotin}.
\par

The twin group $T_n$ is generated by $n-1$ involutions $s_{1}, s_{2}, \ldots, s_{n-1}$. The pure twin  group $PT_n$ is defined as the kernel of the natural homomorphism from the twin group $T_n$ to the symmetric group $S_n$ on the set $\{1, \ldots, n \}$, where the homomorphism maps the twin $s_i$ to the transposition $(i, i + 1)$. A nice topological interpretation of the pure twin group $PT_n$ is also known \cite{Kh96}. Consider the space
$$
X_n = \mathbb{R}^n \setminus \big\{ (x_1, x_2, \ldots, x_n) \in \mathbb{R}^n  \, | \, x_i = x_j = x_k, \quad i \not= j \not= k \not= i \big\},
$$
the complement of triple diagonals $x_{i} = x_{j} = x_{k}$. In \cite{Kh96}, Khovanov proved that the fundamental group $\pi_1(X_n)$ is isomorphic to pure twin group $PT_n$. Before that, Bj\"{o}rner and Welker \cite{BW} had investigated cohomology of these spaces and shown that each $H^i(X_n, \mathbb{Z})$ is free. They had also computed the ranks in some cases.
\par

The purpose of this paper is to investigate largely unexplored structural aspects of twin groups and pure twin groups. The paper is organised as follows. In Section~\ref{sec2}, we prove that the twin group $T_n$ decomposes into a semi-direct product for $n> 2$ (Proposition \ref{twin-semi-direct-product}), and a  free product with amalgamation for $n>4$ (Theorem~\ref{t2.3}). In Section~\ref{sec3}, we discuss pure twin groups. We prove that $PT_n$ is free for $n=3,4$ (Theorem \ref{pure-twin-n23}), and not free for $n\ge 6$ (Theorem~\ref{pure-twin-not-free}). Though our approach is mostly algebraic, we also present a geometric proof of freeness of $PT_4$ by showing that it can be realised as the fundamental group of the 8-punctured 2-sphere. In Section \ref{sec4}, we determine a generating set for $PT_n$ (Theorem \ref{pure-twin-gen}), and give an upper bound for the rank of $PT_n$ (Theorem \ref{rank-pure-twin}). We also show that there is a natural faithful representation of $T_4$ into $\Aut(F_7)$ (Proposition \ref{pt4-faithful}). Finally, in Section \ref{sec5}, we introduce virtual twin groups $VT_n$, welded twin groups $WT_n$, and formulate some problems for future work.
\par

Throughout the paper, we use the notation $a^b:=b^{-1}ab$ and $[a,b] := a^{-1} b^{-1} a b$.
\vspace*{1mm}

\section{Twin groups} \label{sec2}
Let $n \ge 2 $ be an integer. The \emph{twin group} $T_n$ is generated by the elements $s_1, s_2, \ldots, s_{n-1}$, and defined by the relations
\begin{eqnarray}\label{r1}
 s_{i}^2 & =& 1  \qquad \mbox{for}~ i = 1, 2, \ldots,n-1,
\end{eqnarray}
and
\begin{eqnarray}\label{r2}
  s_{i} s_{j} & =& s_{j} s_{i} \qquad \mbox{for}~ |i-j|\geq 2.
\end{eqnarray}
In particular, $$T_2 = \big\langle s_1~|~s_1^2 = 1 \big\rangle = \mathbb{Z}_2$$ is the cyclic group of order 2, and
$$
T_3 = \big\langle s_1, s_2~|~s_1^2 = s_2^2 = 1 \big\rangle = \mathbb{Z}_2 * \mathbb{Z}_2
$$
is the infinite dihedral group. Braid relations are missing in twin groups since triple intersections are not allowed for doodles.

\subsection{Reidemeister--Schreier method} We recall the well-known Reidemeister--Schreier me\-thod~\cite[Theorem 2.6]{MKS} that yields a presentation for a finite index subgroup $H$ of a finitely presented group $G=\langle X~|~ \mathcal{R} \rangle$. Consider the Schreier set  $\Lambda$ of coset representatives of $H$ in $G$. For each element $g \in G$, let $\overline{g}$ denote the unique coset representative of the coset of $g$ in the Schreier set $\Lambda$. Then by Reidemeister--Schreier Theorem \cite{MKS}, the subgroup $H$ is generated by the set
$$\big\{S_{\lambda, a}= (\lambda a)(\overline{\lambda a})^{-1}~|~\lambda \in \Lambda~\textrm{and}~a \in X \big\}.$$
\par

Further, the defining relations for $H$ are
$$\big\{ \tau(\lambda r \lambda^{-1})~|~ \lambda \in \Lambda~\mathrm{and}~r\in \mathcal{R} \big\},$$
where $\tau$ is Reidemeister's transformation, which  maps every nonempty word $x_{i_1}^{\varepsilon_1} \ldots x_{i_p}^{\varepsilon_p}$ $(\varepsilon_j=\pm 1)$ to a word in  $S_{K_\alpha, a_\nu}$ by the rule:
$$
\tau (x_{i_1}^{\varepsilon_1}\ldots x_{i_p}^{\varepsilon_p})=  S_{K_{i_1}, x_{i_1}}^{\varepsilon_1} \ldots S_{K_{i_p}, x_{i_p}}^{\varepsilon_p},
$$
where $K_{i_j}=\overline{x_{i_1}^{\varepsilon_1}\ldots x_{i_{j-1}}^{\varepsilon_{j-1}}}$, if $\varepsilon_j=1$, and $K_{i_j}=\overline{x_{i_1}^{\varepsilon_1}\ldots x_{i_{j}}^{\varepsilon_{j}}}$, if $\varepsilon_j=-1$.
\vspace*{1mm}

\subsection{Decomposition of $T_n$ into semi-direct product} It is easy to check that there is a surjective homomorphism $$\varphi_n : T_n \longrightarrow T_{n-1},$$ which maps $s_{n-1}$ to 1 and  $s_{i}$ to $s_i$ for all $i\neq n-1$. From the geometrical point of view $\varphi_n$ is the deletion of the $n$-th strand in the braid-type geometric presentation of twines.

Denote by $D_n = \text{\rm Ker}(\varphi_n)$, the kernel of the homomorphism $\varphi_n$.

\begin{prop}The following properties hold:\label{twin-semi-direct-product}
\begin{enumerate}
\item $T_n = D_n \leftthreetimes T_{n-1}$ for $n >2$.
\item $D_3 = \mathbb{Z}_2 * \mathbb{Z}_2 \cong T_3$.
\item $D_4 = \big\langle a_k, k \in \mathbb{Z} ~|~a^2_k = 1, k \in \mathbb{Z} \big\rangle$, the free product of  $\mathbb{Z}_2$ indexed by integers.
\end{enumerate}
\end{prop}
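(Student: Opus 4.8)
The plan is to treat (1) by splitting the surjection $\varphi_n$, and to obtain (2) and (3) by running the Reidemeister--Schreier method on $D_n=\ker(\varphi_n)$ using the coset space $T_n/D_n\cong T_{n-1}$.

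For (1), I would first observe that the assignment $s_i\mapsto s_i$ for $1\le i\le n-2$ defines a homomorphism $\iota\colon T_{n-1}\to T_n$, because the defining relations of $T_{n-1}$ (the squaring relations and the commuting relations for $|i-j|\ge 2$) all already hold among $s_1,\dots,s_{n-2}$ inside $T_n$. Checking on generators gives $\varphi_n\circ\iota=\id_{T_{n-1}}$, so $\iota$ is a section of the surjection $\varphi_n$. Since $D_n=\ker(\varphi_n)$ is normal and $\varphi_n$ splits, the standard semi-direct product criterion yields $T_n=D_n\rtimes\iota(T_{n-1})\cong D_n\rtimes T_{n-1}$ for every $n>2$, which is assertion (1).

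For (2) and (3) I would compute a presentation of $D_n$ via the Reidemeister--Schreier procedure recalled above. As a Schreier transversal $\Lambda$ I would take the reduced alternating words representing the elements of $T_{n-1}=\langle s_1,\dots,s_{n-2}\rangle$, regarded as words in $T_n$; this set is prefix-closed, hence admissible. The first key point is that $S_{\lambda,a}=1$ whenever $a\in\{s_1,\dots,s_{n-2}\}$, since $\lambda a$ and its representative $\overline{\lambda a}$ already coincide in $T_n$ (using $s_i^2=1$ to reduce $\lambda a$ when $\lambda$ ends in $a$); thus the only possibly nontrivial generators are $a_\lambda:=S_{\lambda,s_{n-1}}=\lambda\,s_{n-1}\,\lambda^{-1}$, one for each $\lambda\in\Lambda$. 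For $n=3$ the transversal is $\Lambda=\{1,s_1\}$, giving $a_1=s_2$ and $a_{s_1}=s_1s_2s_1$; pushing the relators $s_1^2,s_2^2$ through $\tau$ produces exactly $a_1^2=a_{s_1}^2=1$ and nothing else, so $D_3=\mathbb{Z}_2*\mathbb{Z}_2\cong T_3$, which is (2).

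For (3) the transversal is the infinite set of reduced words in $s_1,s_2$, and I would process each defining relator of $T_4$ in turn. The relators $s_1^2,s_2^2$ yield trivial relations (their $\tau$-images only involve the trivial generators $S_{\lambda,s_1},S_{\lambda,s_2}$); the relator $s_3^2$ gives $a_\lambda^2=1$ for every $\lambda$; and the commuting relator $s_1s_3s_1^{-1}s_3^{-1}$ is the decisive one, whose $\tau$-image evaluates to $a_{\overline{\lambda s_1}}\,a_\lambda^{-1}$, producing the identifications $a_{\overline{\lambda s_1}}=a_\lambda$ for all $\lambda$. These identifications collapse the generating set: they identify $a_\lambda$ with $a_{\overline{\lambda s_1}}$, so the equivalence classes are the left cosets of $\langle s_1\rangle$ in $T_3$, and after Tietze transformations the surviving generators are indexed by the reduced words not ending in $s_1$. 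This is a countably infinite set, which I would biject with $\mathbb{Z}$; as the only remaining relations are $a_k^2=1$, this exhibits $D_4$ as the free product $\langle a_k,\ k\in\mathbb{Z}\mid a_k^2=1\rangle$ of copies of $\mathbb{Z}_2$ indexed by $\mathbb{Z}$.

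The main obstacle, and the real content, is the bookkeeping in the Reidemeister--Schreier computation of the commuting relator: one must track the accumulated coset representative through the word $\lambda s_1s_3s_1^{-1}s_3^{-1}\lambda^{-1}$ and correctly distinguish the $\varepsilon=+1$ and $\varepsilon=-1$ rules for $\tau$, since an indexing slip there is exactly what would spoil the crucial identification $a_{\overline{\lambda s_1}}=a_\lambda$. I would also verify that no relator has been overlooked (the full relation set of $T_4$ is $\{s_1^2,s_2^2,s_3^2,[s_1,s_3]\}$, all accounted for above) and that the resulting collapse genuinely leaves a countable free factorization with no hidden relations, so that the free product structure is correctly identified.
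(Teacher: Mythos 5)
Your proposal is correct and follows essentially the same route as the paper: assertion (1) via the section $s_i\mapsto s_i$ splitting $\varphi_n$, and assertions (2) and (3) via the Reidemeister--Schreier method over a transversal of reduced words for $T_{n-1}$, with the commuting relator $[s_1,s_3]$ supplying the identifications that halve the generating set of $D_4$. The only difference is notational: the paper parametrizes the transversal explicitly as $s_1^{\varepsilon}(s_1s_2)^k$ and eliminates the generators $a_{1,k}$ in favour of $a_{0,k}$, whereas you describe the same collapse in terms of the cosets $\lambda\langle s_1\rangle$, which amounts to the same bijection with $\mathbb{Z}$.
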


\begin{proof}
Recall that $T_{n-1}$ is a subgroup of $T_n$. In fact, the map $T_{n-1} \to  T_n$ given by $s_i \to s_i$, $1 \le i \le n-2$, is a splitting of the extension $$1 \to D_n \to T_n \to  T_{n-1} \to 1.$$ Thus, $T_n = D_n \leftthreetimes T_{n-1}$, which is assertion (1).
\par

By definition we have $T_3 = \langle s_1, s_2 ~|~s_1^2 = s_2^2 = 1 \rangle$ and $|T_{2}| = 2$. Then  $D_3 = \text{\rm Ker}\{ \varphi_3 : T_3 \longrightarrow T_2 \}$ has index $2$ in $T_3$. Consider  $\Omega_3 = \{ 1, s_1 \}$ as a set of coset representatives of $D_3$ in $T_3$. By Reidemeister--Schreier  method the group $D_3$ is generated by elements
$$
S_{\lambda, a} = \lambda a\, (\overline{\lambda a})^{-1}, \qquad \lambda \in \Omega_3, \quad a \in \{ s_1, s_2 \}.
$$
A direct computation yields
$$
S_{1,s_1} = 1, \qquad S_{1,s_2} = s_2, \qquad S_{s_1,s_1} = 1, \qquad S_{s_1,s_2} = s_1 s_2 s_1.
$$
Thus, $D_3$ is generated by elements $b = s_2$ and $c = s_1 s_2 s_1$. Further, it is easy to see that $D_3$ is defined by the relations $b^2 = c^2 = 1$, and hence $D_3 \cong \mathbb{Z}_2 * \mathbb{Z}_2$, which is assertion (2).
\par

Every element of $T_3$ can be written in the form $s_1^{\varepsilon} (s_1 s_2)^k$ for some $\varepsilon \in \{ 0, 1 \}$ and $k \in \mathbb{Z}$. Let $\Omega_4$ denotes the set of words of this form and consider it as a set of coset representatives of $D_4$ in $T_4$. Then $D_4$ is generated by elements
$$
S_{\lambda, a} = \lambda a \,(\overline{\lambda a})^{-1}, \qquad \lambda = s_1^{\varepsilon} (s_1 s_2)^k, \quad \varepsilon \in \{ 0, 1 \}, \quad k \in \mathbb{Z}, \quad a \in \{ s_1, s_2, s_3 \}.
$$
It is easy to check that $S_{\lambda, a} = 1$ for all $\lambda \in \Omega_4$ and $a \in \{ s_1, s_2 \}$. If $a = s_3$, then
$$
S_{\lambda, s_3} = s_1^{\varepsilon} (s_1 s_2)^k s_3  (s_1 s_2)^{-k} s_1^{\varepsilon}.
$$
Thus, in fact, $D_4$ is generated by elements
$$
a_{\varepsilon, k} = s_1^{\varepsilon} (s_1 s_2)^k s_3  (s_1 s_2)^{-k} s_1^{\varepsilon},
$$
where $\varepsilon \in \{ 0, 1 \}$ and $k \in \mathbb{Z}$. By Reidemeister--Schreier  method $D_4$ has defining  relations
$$
\tau (\lambda r \lambda^{-1}) = 1,
$$
where $r$ is a relation of $T_4$ and $\tau$ is a rewriting process.
\par

For the relation $r = [s_1, s_3]$  of $T_4$, we get
$$
\lambda [s_1, s_3] \lambda^{-1} = \lambda s_1 s_3 s_1 \lambda^{-1} ~ \lambda s_3 \lambda.
$$
For the case $\varepsilon = 1$ we obtain
$$
\tau \big(\lambda [s_1, s_3] \lambda^{-1}\big) = a_{0,-k} a_{1,k} = 1,
$$
and for the case $\varepsilon = 0$ we obtain
$$
\tau \big(\lambda [s_1, s_3] \lambda^{-1}\big) = a_{1,-k} a_{0,k} = 1.
$$
These two types of relations can be used to eliminate all generators of the type $a_{1,k}$, and hence $D_4$ is generated by elements $a_k = a_{0,k}$.
\par

For the relation $r = s_3^2$ of $T_{4}$ we have
$$
\tau (\lambda s_3^2 \lambda^{-1}) = a_{\varepsilon, k}^2,
$$
and therefore $a_k^2 = 1$.
\par

For the relations $r = s_1^2$ and $r = s_2^2$, the relations $\tau (\lambda r \lambda^{-1})=1$ are trivial since the generators $S_{\lambda, s_1}$ and $S_{\lambda, s_2}$ are trivial.  Thus, $D_4$ is generated by $a_k$ with defining relations $a_k^2 = 1$ for $k \in \mathbb{Z}$. This completes the proof of assertion (3).
\end{proof}

\subsection{Decomposition of $T_n$ into free product with amalgamation} In this subsection, we prove that $T_3$ and $T_4$ decompose into a free product, and $T_n$ decompose into a free product with amalgamation for $n>4$.

It is not difficult to find the structure of $T_n$ and its commutator subgroup $T'_n$ for small values of $n$.

\begin{prop} \label{p2.2} For $n=3, 4$, $T_{n}$ and $T'_{n}$ have the following description:
\begin{enumerate}
\item $T_3  \cong \mathbb{Z}_2 * \mathbb{Z}_2$.
\item $ T_3' = \big\langle [s_1, s_2] = (s_1 s_2)^2 \big\rangle \cong \mathbb{Z}$.
\item $T_4  \cong  \left( \mathbb{Z}_2 \times \mathbb{Z}_2 \right) * \mathbb{Z}_2$.
\item $T_4' = \big\langle [s_1, s_2], [s_3, s_2], [s_1 s_3, s_2] \big\rangle \cong F_3$.
\end{enumerate}
\end{prop}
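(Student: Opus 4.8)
The plan is to dispatch the four assertions in increasing order of difficulty, the first three being essentially immediate from the defining presentation and the fourth requiring the Reidemeister--Schreier machinery recalled above. For assertion (1), I would simply note that the defining presentation $\langle s_1, s_2 \mid s_1^2 = s_2^2 = 1\rangle$ is tautologically the standard presentation of $\mathbb{Z}_2 * \mathbb{Z}_2$, as already observed when identifying $T_3$ with the infinite dihedral group. Assertion (3) is the same idea for $n=4$: since the only instance of \eqref{r2} is $s_1 s_3 = s_3 s_1$, the subgroup $\langle s_1, s_3\rangle$ has presentation $\langle s_1, s_3 \mid s_1^2 = s_3^2 = [s_1,s_3] = 1\rangle \cong \mathbb{Z}_2 \times \mathbb{Z}_2$, the subgroup $\langle s_2\rangle \cong \mathbb{Z}_2$, and there are no relations linking the two generating sets; comparing presentations term by term gives $T_4 \cong (\mathbb{Z}_2 \times \mathbb{Z}_2) * \mathbb{Z}_2$. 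For assertion (2) I would use that $T_3 \cong \mathbb{Z}_2 * \mathbb{Z}_2$ abelianises to $\mathbb{Z}_2 \times \mathbb{Z}_2$, so $T_3'$ has index $4$. Writing $t = s_1 s_2$, the subgroup $\langle t\rangle$ is normal of index $2$ (since $s_1 t s_1 = t^{-1}$) and infinite cyclic, while the identity $[s_1, s_2] = s_1 s_2 s_1 s_2 = (s_1 s_2)^2 = t^2$ (using $s_i^{-1}=s_i$) exhibits $T_3'$ as the index-$2$ subgroup $\langle t^2\rangle$ of $\langle t\rangle$. Hence $T_3' = \big\langle (s_1 s_2)^2\big\rangle \cong \mathbb{Z}$.

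The substance lies in assertion (4). Here I would apply the Reidemeister--Schreier method to $T_4'$, whose index equals $|T_4^{\mathrm{ab}}| = |\mathbb{Z}_2^3| = 8$. A convenient prefix-closed Schreier transversal is $\{1, s_1, s_2, s_3, s_1 s_2, s_1 s_3, s_2 s_3, s_1 s_2 s_3\}$. Computing the generators $S_{\lambda, a} = \lambda a (\overline{\lambda a})^{-1}$, one finds $S_{\lambda, a}=1$ whenever $\lambda a$ already lies in the transversal, and, crucially, the commuting relation forces further vanishings such as $S_{s_3, s_1} = s_3 s_1 (s_1 s_3)^{-1} = 1$. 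Every surviving generator is then a power $(s_i s_j)^{\pm 2}$ or an $s_1$-conjugate of such a power, i.e. a conjugate of one of $[s_1, s_2]$, $[s_3, s_2]$, $[s_1 s_3, s_2]$. Rewriting the defining relators $s_1^2, s_2^2, s_3^2, [s_1, s_3]$ conjugated by the eight transversal elements, I expect each resulting relation to collapse after these substitutions, leaving a free presentation on three generators; this simultaneously proves freeness and pins down the stated basis.

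As an independent check on freeness and rank, I would invoke the Kurosh subgroup theorem together with an Euler characteristic count. Since $T_4 = A * B$ with $A = \mathbb{Z}_2 \times \mathbb{Z}_2$ and $B = \mathbb{Z}_2$ both abelian, the abelianisation restricts injectively to each factor, so $T_4'$ meets every conjugate of $A$ and of $B$ trivially; by Kurosh, $T_4'$ is free. For the rank, $\chi(T_4) = \tfrac14 + \tfrac12 - 1 = -\tfrac14$, and multiplicativity gives $\chi(T_4') = 8 \cdot (-\tfrac14) = -2 = 1 - \mathrm{rank}$, so $\mathrm{rank}(T_4') = 3$ and $T_4' \cong F_3$. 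Once freeness and rank $3$ are in hand, it suffices to check that $[s_1, s_2]$, $[s_3, s_2]$, $[s_1 s_3, s_2]$ generate $T_4'$, since any three generators of a free group of rank $3$ automatically form a basis (free groups being Hopfian).

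The main obstacle I anticipate is the purely computational one in the Reidemeister--Schreier step: organising the $24$ generators $S_{\lambda, a}$ and the $32$ rewritten relators, and verifying that after eliminating the trivial generators the remaining relations are redundant and the surviving generators reduce precisely to the three commutators claimed. The Euler characteristic argument sidesteps the counting and cleanly yields freeness and rank, but it does not by itself hand over the explicit basis, so some honest rewriting is unavoidable to confirm that $[s_1, s_2]$, $[s_3, s_2]$, $[s_1 s_3, s_2]$ is the intended free generating set.
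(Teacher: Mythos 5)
Your proposal is correct, but for the substantive parts (2) and (4) it takes a genuinely different route from the paper. The paper handles both commutator subgroups with a single citation: for abelian groups $A$ and $B$, the commutator subgroup of $A*B$ is the kernel of $A*B\to A\times B$ and is \emph{free with basis the mixed commutators} $[a,b]$, $1\neq a\in A$, $1\neq b\in B$ (Magnus--Karrass--Solitar, p.~197). Applied to $T_3=\langle s_1\rangle *\langle s_2\rangle$ this gives $T_3'=\langle [s_1,s_2]\rangle\cong\mathbb{Z}$ at once, and applied to $T_4=\langle s_1,s_3\rangle *\langle s_2\rangle$ it simultaneously gives freeness, rank $3$, and the exact basis $[s_1,s_2],[s_3,s_2],[s_1s_3,s_2]$, since the nontrivial elements of $\mathbb{Z}_2\times\mathbb{Z}_2$ are precisely $s_1,s_3,s_1s_3$. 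Your dihedral-group argument for (2) and your Kurosh-plus-Euler-characteristic argument for (4) are both valid and pleasantly self-contained: the Kurosh step correctly shows $T_4'$ meets all conjugates of the factors trivially (so is free), and $\chi(T_4')=8\cdot(-\tfrac14)=-2$ correctly pins the rank at $3$, after which Hopficity reduces the basis claim to a generation claim. The trade-off is that your route still owes an explicit verification that the three commutators generate $T_4'$ --- the Reidemeister--Schreier bookkeeping you defer to ``honest rewriting'' --- whereas the paper's citation delivers the named basis with no computation. If you want to keep your architecture, the cleanest way to close that last step is to invoke the same classical fact only for the generation statement (the mixed commutators normally generate, and in fact generate, the kernel of $A*B\to A\times B$), or to carry out the rewriting over your transversal, which does collapse as you predict.
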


\begin{proof}
The decomposition of $T_3$ and $T_4$ follows from the definition. More precisely, we can write
\begin{eqnarray*}
T_4 & =  &  \big\langle s_1, s_2, s_3 ~|~s_1^2 = s_2^2 = s_3^2 = 1, s_1 s_3  = s_3 s_1  \big\rangle \\
& = & \left(\langle s_1, s_3 ~|~s_1^2  = s_3^2 = 1, s_1 s_3  = s_3 s_1 \rangle \right) * \langle s_2~|~s_2^2 = 1 \rangle  \\
& \cong & \left( \mathbb{Z}_2 \times \mathbb{Z}_2 \right) * \mathbb{Z}_2.
\end{eqnarray*}

The results on the commutator subgroups follow from the following fact (see \cite[p.197]{MKS}): the commutator subgroup of the free product $A * B$ of abelian groups $A$ and $B$ is the subgroup of mixed commutators $[A, B]$ which is the kernel of the homomorphism $A * B \longrightarrow A \times B$. Further, $[A, B]$ is free with basis consisting of elements $[a, b]$, where $1 \not= a \in A$ and $1 \not= b \in B$.
\end{proof}

In general, the following decomposition holds.

\begin{theorem} \label{t2.3}
If $n > 4$, then the twin group $T_n$ has the following description as a free product with amalgamation:
$$
T_n \cong \left( \mathbb{Z}_2 \times T_{n-2} \right) \, *_{T_{n-3}} \left( \mathbb{Z}_2 \times T_{n-3} \right).
$$
\end{theorem}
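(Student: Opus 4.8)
The plan is to realise $T_n$ as a right-angled Coxeter group and to read off the asserted decomposition as a splitting over a special subgroup. The relations \eqnref{r1} and \eqnref{r2} say precisely that $T_n = W_\Gamma$, the right-angled Coxeter group whose defining graph $\Gamma$ has vertex set $\{s_1, \ldots, s_{n-1}\}$ and an edge joining $s_i$ to $s_j$ exactly when $|i-j| \ge 2$ (so that edges record commuting pairs). I would then set
\[
A = \langle s_1, s_3, s_4, \ldots, s_{n-1} \rangle, \quad B = \langle s_2, s_4, s_5, \ldots, s_{n-1} \rangle, \quad C = \langle s_4, s_5, \ldots, s_{n-1} \rangle,
\]
and prove that $T_n = A *_C B$.

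First I would identify the three subgroups. Each is generated by a subset $J$ of the Coxeter generators, so by the standard theory of Coxeter groups it is the right-angled Coxeter group on the induced subgraph $\Gamma[J]$; in particular a block of consecutive generators $s_a, \ldots, s_b$ generates a copy of $T_{b-a+2}$. In $A$ the generator $s_1$ is joined to every other vertex, whence $A = \langle s_1 \rangle \times \langle s_3, \ldots, s_{n-1} \rangle \cong \mathbb{Z}_2 \times T_{n-2}$; similarly $s_2$ is joined to every vertex of $B$, so $B = \langle s_2 \rangle \times \langle s_4, \ldots, s_{n-1} \rangle \cong \mathbb{Z}_2 \times T_{n-3}$; and $C = \langle s_4, \ldots, s_{n-1} \rangle \cong T_{n-3}$. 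The hypothesis $n > 4$ guarantees that $C$ is a genuine twin group.

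The heart of the argument is the splitting. I would observe that every defining relation of $T_n$ lies entirely inside $A$ or entirely inside $B$: the relations $s_i^2 = 1$ each involve a single generator, while a commuting relation $[s_i, s_j] = 1$ with $|i-j| \ge 2$ could avoid both $A$ and $B$ only if one of its letters were $s_2$ (the unique generator outside $A$) and the other were $s_1$ or $s_3$ (the generators outside $B$). But $[s_1, s_2]$ and $[s_2, s_3]$ are not relations of $T_n$, since consecutive generators do not commute; equivalently, $\Gamma$ has no edge between $A \setminus C = \{s_1, s_3\}$ and $B \setminus C = \{s_2\}$. Hence the presentation of $T_n$ is exactly the union of the presentations of $A$ and $B$ with the common generators of $C$ identified, which is the presentation of the pushout $A *_C B$.

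It then remains to check that this pushout is a genuine amalgamated free product, that is, that $C$ embeds in each factor and that $A \cap B = C$ inside $T_n$; this is the step I expect to be the main obstacle, since a bare presentation comparison only yields the pushout. Here I would invoke the standard fact that for any Coxeter system the special subgroups satisfy $W_J \cap W_K = W_{J \cap K}$ and that each inclusion $W_{J \cap K} \hookrightarrow W_J$ is injective. Applied to $J = \{s_1, s_3, \ldots, s_{n-1}\}$ and $K = \{s_2, s_4, \ldots, s_{n-1}\}$, this gives $A \cap B = C$ together with the required injectivity, yielding
\[
T_n \cong (\mathbb{Z}_2 \times T_{n-2}) *_{T_{n-3}} (\mathbb{Z}_2 \times T_{n-3}).
\]
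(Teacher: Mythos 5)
Your proposal is correct and follows essentially the same route as the paper: the same standard subgroups $A = \langle s_1, s_3, \ldots, s_{n-1}\rangle \cong \mathbb{Z}_2 \times T_{n-2}$ and $B = \langle s_2, s_4, \ldots, s_{n-1}\rangle \cong \mathbb{Z}_2 \times T_{n-3}$, amalgamated over $\langle s_4, \ldots, s_{n-1}\rangle \cong T_{n-3}$. You merely make explicit, via the theory of special subgroups of (right-angled) Coxeter groups, the presentation-splitting and intersection facts that the paper's proof leaves implicit.
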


\begin{proof}
Let $A = \langle s_1, s_3, s_4, \ldots, s_{n-1} \rangle$ be a subgroup of $T_n$ generated by the same set as $T_n$ except the generator  $s_2$. Then $A$ is a standard subgroup of $T_n$, i.~e. it is generated by a subset of the generating set of $T_n$. Hence $A$ has the presentation
\begin{eqnarray*}
A  &= & \big\langle s_1, s_3, s_4, \ldots, s_{n-1} ~|~ s_i^2 =  [s_1, s_j] = [s_j, s_k] = 1;\\
& & i = 1, 3, 4, \ldots, n-1; \quad  j = 3, 4, \ldots, n-1; \quad  j < k \leq n-1; \quad k-j > 1  \big\rangle\\
& \cong &  \mathbb{Z}_2 \times T_{n-2},
\end{eqnarray*}
where $\langle s_1 \rangle\cong \mathbb{Z}_2$ and the subgroup of $A$ that  is generated by $s_3, s_4, \ldots, s_{n-1}$ is isomorphic to $T_{n-2}$.

Consider the group
\begin{eqnarray*}
B & =&   \big\langle s_2, t_4, t_5, \ldots, t_{n-1} ~|~ s_2^2 = t_i^2 = [s_2, t_i] = [t_i, t_j] = 1;\\
& & i = 4, 5, \ldots, n-1; \quad i < j \leq n-1;  \quad j-i > 1  \big\rangle \\
& \cong & \mathbb{Z}_2 \times T_{n-3}.
\end{eqnarray*}

Then there exists an embedding $B \longrightarrow T_n$ defined on generators by the rules
$$
s_2 \mapsto s_2, \quad t_i \mapsto s_i,  \quad i = 4, 5, \ldots, n-1.
$$
The preceding embedding induces an isomorphism between the subgroup $A_{1} = \langle s_4, s_5, \ldots, s_{n-1} \rangle$ of $A$ and the subgroup $B_{1} = \langle t_4, t_5, \ldots, t_{n-1} \rangle$ of $B$ defined by $s_{i} \mapsto t_{i}$ for $i=4, \ldots, n-1$. It is easy to see that $A_{1} \cong B_{1} \cong T_{n-3}$, and hence
$$
T_n \cong A \, *_{T_{n-3}} B,
$$
which is the desired decomposition.
\end{proof}
\par

As a consequence, we obtain the following result.
\par

\begin{corollary}\label{center-twin}
The center of the twin group $T_n$ is trivial for $n>2$.
\end{corollary}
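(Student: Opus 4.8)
The plan is to argue by induction on $n$, using the amalgamated decomposition of \thmref{t2.3} together with the classical fact that the center of a proper amalgamated free product is contained in the amalgamated factor. For the base of the induction I would dispose of $n=3,4,5$ directly. When $n=3,4$, \propref{p2.2} exhibits $T_n$ as a nontrivial free product of two nontrivial groups, namely $\mathbb{Z}_2*\mathbb{Z}_2$ and $(\mathbb{Z}_2\times\mathbb{Z}_2)*\mathbb{Z}_2$; since any free product of two nontrivial groups has trivial center, this gives $Z(T_3)=Z(T_4)=1$.

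For the inductive step I would take $n\ge 6$ and read off from the proof of \thmref{t2.3} the explicit pieces of the decomposition $T_n\cong A*_{C}B$, where $A=\langle s_1,s_3,\dots,s_{n-1}\rangle\cong\mathbb{Z}_2\times T_{n-2}$, $B=\langle s_2,s_4,\dots,s_{n-1}\rangle\cong\mathbb{Z}_2\times T_{n-3}$, and the amalgamated subgroup is $C=\langle s_4,\dots,s_{n-1}\rangle\cong T_{n-3}$. The subgroup $C$ is proper in both $A$ (which also contains $s_1$ and $s_3$) and $B$ (which also contains $s_2$), so the amalgam is nontrivial and the center theorem for amalgamated products (see \cite{MKS}) yields $Z(T_n)\subseteq C$. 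Now any $z\in Z(T_n)$ lies in $C=\langle s_4,\dots,s_{n-1}\rangle$ and commutes with every element of $T_n$, in particular with every element of this subgroup; hence $z$ lies in the center of $\langle s_4,\dots,s_{n-1}\rangle\cong T_{n-3}$. Since $n-3\ge 3$, the inductive hypothesis forces $Z(T_{n-3})=1$, and therefore $z=1$.

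The delicate point, and the reason the induction cannot start one step earlier, is the value $n=5$: here the reduction above only places $Z(T_5)$ inside $C=\langle s_4\rangle\cong T_2=\mathbb{Z}_2$, whose center is all of $\mathbb{Z}_2$, so the inductive hypothesis is not available. I would settle this case by hand: the only nontrivial element of $C$ is $s_4$, and $s_4$ fails to commute with $s_3$ because $|4-3|<2$, so $s_4\notin Z(T_5)$ and thus $Z(T_5)=1$. The main thing to be careful about throughout is the invocation of $Z(A*_{C}B)\subseteq C$: this requires precisely that $C$ be a proper subgroup of each factor, which is exactly what I verify from the description of $A$, $B$, and $C$ above. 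Granting this, all base cases and the inductive step fit together to give $Z(T_n)=1$ for every $n>2$.
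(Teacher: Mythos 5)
Your proof is correct and follows essentially the same route as the paper: trivial-center base cases for $T_3$, $T_4$ via the free product decompositions of Proposition~\ref{p2.2}, the reduction $\Za(T_n)\le \Za(T_{n-3})$ via the amalgamated decomposition of Theorem~\ref{t2.3} (the paper cites \cite{Hou} for $\Za(G_1*_H G_2)\le \Za(H)$, which you rederive from $\Za\subseteq H$ plus centrality), and the same by-hand disposal of $T_5$ using the fact that $s_4$ does not commute with $s_3$.
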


\begin{proof}
By Proposition \ref{p2.2}, $T_3$ and $T_4$ are non-trivial free products, and hence have trivial centers. By \cite[Lemma 2.5]{Hou}, if $G = G_1 *_H G_2$ is a free product of two groups with amalgamation such that $G_1 \not= H$ and $G_2 \not= H$, then $\Za(G) \leq \Za(H)$. This together with Theorem \ref{t2.3} implies that $\Za(T_n) \le\Za(T_{n-3})$ for $n > 4$. Thus, to conclude that $\Za(T_n)=1$ for all $n>2$, it only suffices to show that $\Za(T_5)=1$.
\par

It follows from Theorem \ref{t2.3} that
$$
T_5 = A *_{T_2} B,
$$
where $A = \langle s_1, s_3, s_4 \rangle \cong \langle s_1 \rangle \times \langle s_3, s_4 \rangle \cong \mathbb{Z}_2 \times T_3$ and $B = \langle s_2, t_4 \rangle  \cong \mathbb{Z}_2 \times \mathbb{Z}_2$. The amalgamated subgroup  $T_2 \cong \langle s_4 \rangle \cong \langle t_4 \rangle \cong \mathbb{Z}_2$, and hence $\Za(T_5)$ is either trivial, or isomorphic to $\mathbb{Z}_2$ with generator $s_4 = t_4$. Since the subgroup $\langle s_3, s_4 \rangle$ is isomorphic to $T_3$, it follows that $s_4$ does not commute with $s_3$, and hence $\Za(T_5)$ is trivial.
\end{proof}
\vspace*{1mm}


\section{Pure twin groups} \label{sec3}

Let $n \ge 2$ be an integer. Then the \emph{pure twin group} $PT_{n}$ is defined as the kernel of the homomorphism
from the twin group to the symmetric group
$$
\pi : T_n \longrightarrow S_n,
$$
which maps the generator $s_i$ to $(i,i + 1)$ for $i=1, 2, \ldots, n-1$.  For example, elements of the type $(s_i s_{i+1})^3$ lie in $PT_n$, and geometrically represented in Figure~\ref{fig2}.
 \begin{figure}[!ht]
 \begin{center}
\includegraphics[height=4cm]{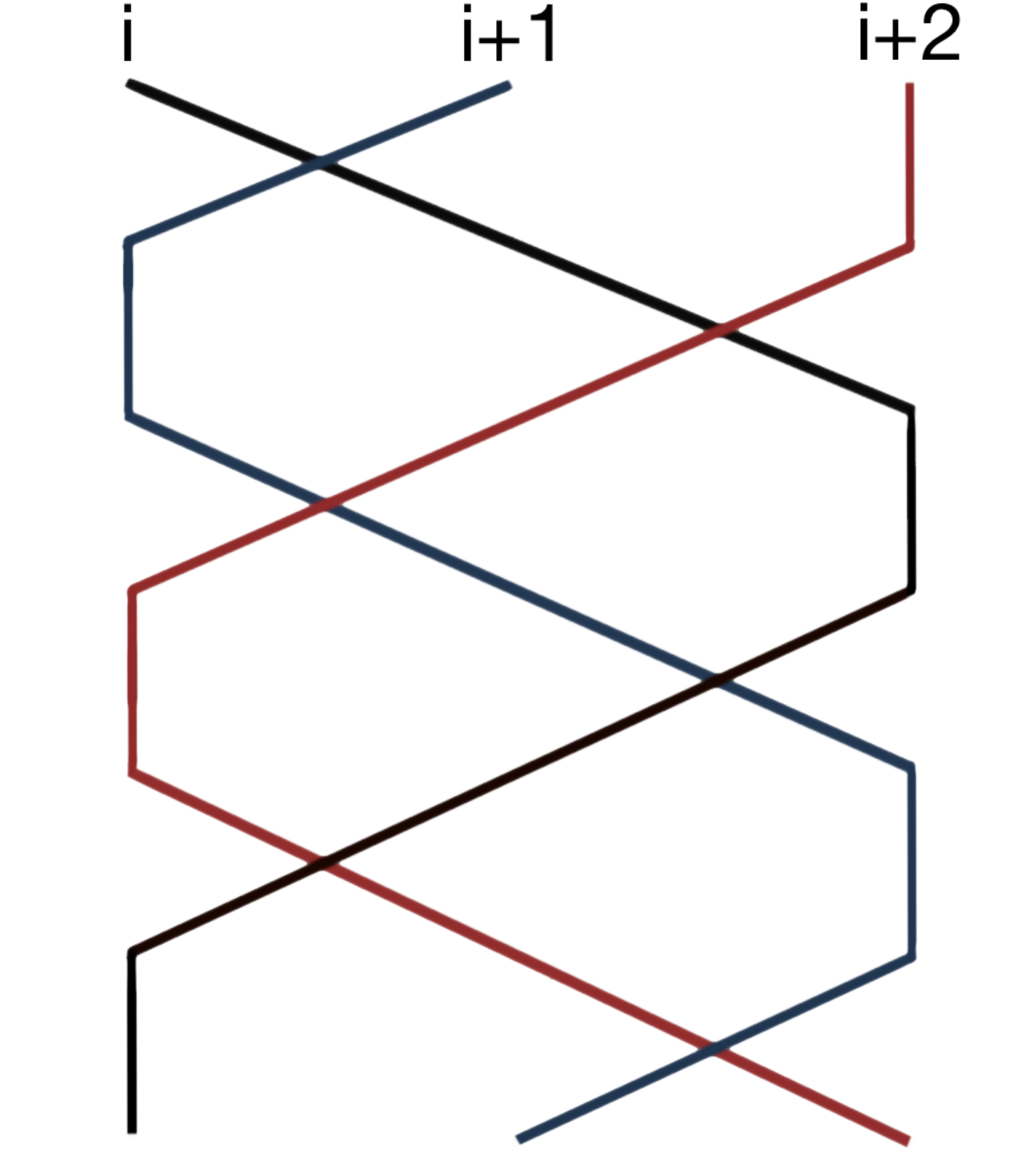}
\end{center}
\caption{Pure twin $(s_is_{i+1})^3$.} \label{fig2}
\end{figure}
The closure of the twin in Figure~\ref{fig2} is the Borromean doodle as shown in Figure~\ref{fig3}.
 \begin{figure}[!ht]
 \begin{center}
\includegraphics[height=4cm, width=5cm]{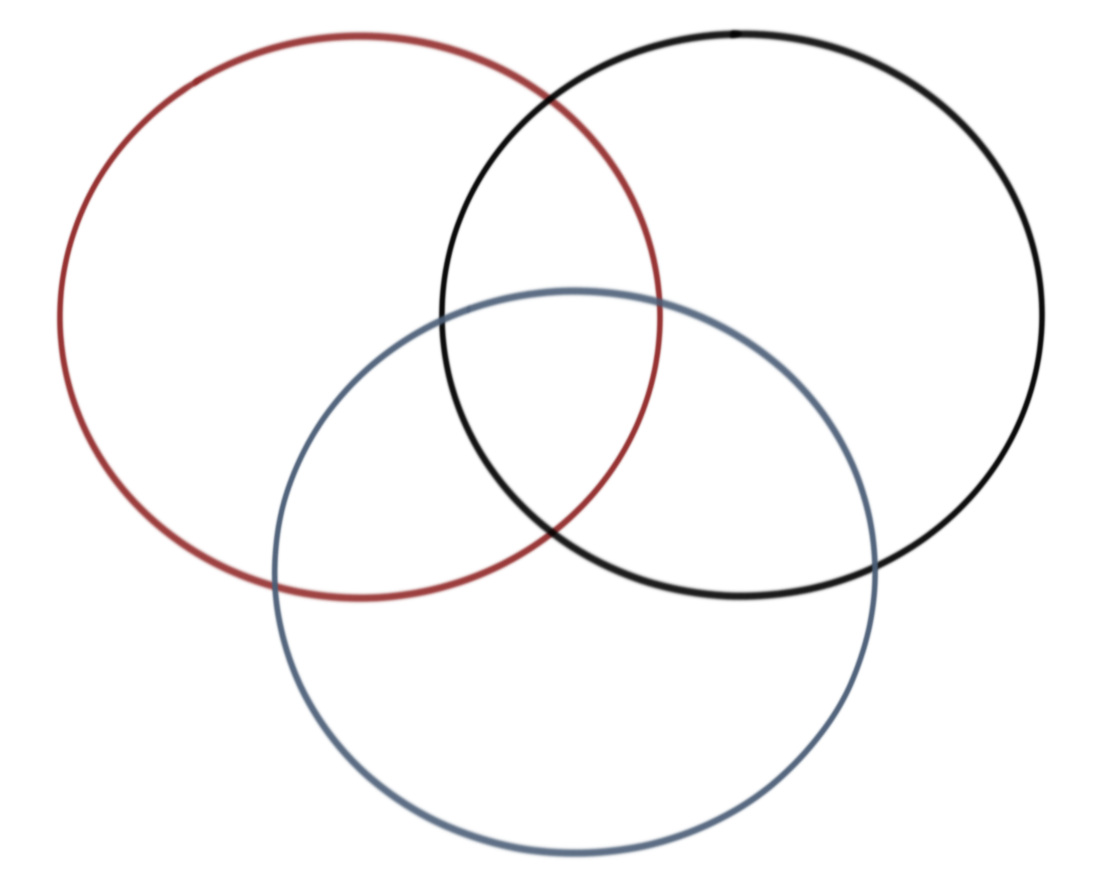}
\end{center}
\caption{Borromean doodle.} \label{fig3}
\end{figure}

Observe that for $n=2$ we have $T_2 \cong S_2$, and hence $PT_2 = 1$.  In this section, we find a generating set for $PT_n$ for $n>2$.  This is achieved by using Reidemeister--Schreier method as recalled in Section \ref{sec2}. Consider the Schreier set of coset representatives of $PT_n$ in $T_n$ given by $$\Lambda_n= \big\{ m_{1,i_1} m_{2,i_2} \ldots m_{n-1,i_{n-1}}~|~m_{k,i_k} = s_k s_{k-1} \ldots s_{i_k+1}~\mathrm{for~each}~1 \le k \le n-1~\mathrm{and}~0 \le i_k < k \big\}.$$
We set $m_{k,k} = 1$ for $1 \le k \le n-1$. For an element $x \in T_n$, let $\overline{x}$ denote the unique coset representative of the coset of $x$ in the Schreier set $\Lambda_n$. Then the pure twin group $PT_n$ is generated by the set
$$\big\{S_{\lambda, a}= (\lambda a)(\overline{\lambda a})^{-1}~|~\lambda \in \Lambda_n~\textrm{and}~a \in \{s_1, s_1, \ldots, s_{n-1} \} \big\}$$
and has relations
$$\big\{ \tau(\lambda r \lambda^{-1})~|~ \lambda \in \Lambda_n~\mathrm{and}~r~ \mathrm{a~ defining~ relation~ in}~T_n \big\},$$
where $\tau$ is Reidemeister's transformation.
\par

\subsection{Structure of $PT_n$}

For $n=3,4$, we determine the complete structure of $PT_n$.

\begin{theorem} \label{pure-twin-n23}
$PT_3 \cong \mathbb{Z}$ and $PT_4 \cong F_7$.
\end{theorem}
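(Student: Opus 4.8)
The plan is to realise $PT_3$ and $PT_4$ as finite-index subgroups of free products of finite groups and then combine the Kurosh subgroup theorem with an Euler characteristic count. Since $\pi\colon T_n\to S_n$ is surjective (the transpositions $(i,i+1)$ generate $S_n$), the index of $PT_n$ equals $n!$, so $[T_3:PT_3]=6$ and $[T_4:PT_4]=24$. By \propref{p2.2} we have $T_3\cong\mathbb{Z}_2*\mathbb{Z}_2$ and $T_4\cong(\mathbb{Z}_2\times\mathbb{Z}_2)*\mathbb{Z}_2$, so in both cases $T_n$ is a free product of finite groups, which is exactly the setting in which Kurosh applies.

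First I would show that $PT_n$ is torsion-free for $n=3,4$. In a free product every element of finite order is conjugate into one of the free factors; here the nontrivial torsion elements of $T_3$ are conjugates of $s_1$ or $s_2$, and those of $T_4$ are conjugates of $s_1$, $s_3$, $s_1s_3$ or $s_2$. Each of these maps under $\pi$ to a nontrivial permutation (a transposition, or the double transposition $(1\,2)(3\,4)$), and since conjugation in $T_n$ descends to conjugation in $S_n$, no nontrivial conjugate lies in $\ker\pi=PT_n$. Hence $PT_n$ has no torsion, and the Kurosh subgroup theorem forces a torsion-free subgroup of a free product of finite groups to be free. This already yields $PT_3\cong F_{r_3}$ and $PT_4\cong F_{r_4}$ for some ranks.

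To pin down the ranks I would use the rational (Wall) Euler characteristic $\chi$, which is multiplicative under finite index, additive for free products via $\chi(A*B)=\chi(A)+\chi(B)-1$, and satisfies $\chi(G)=1/|G|$ for finite $G$ and $\chi(F_r)=1-r$. This gives $\chi(T_3)=\tfrac12+\tfrac12-1=0$ and $\chi(T_4)=\tfrac14+\tfrac12-1=-\tfrac14$, whence $\chi(PT_3)=6\cdot 0=0$ and $\chi(PT_4)=24\cdot(-\tfrac14)=-6$. Solving $1-r=\chi$ yields $r_3=1$ and $r_4=7$, that is $PT_3\cong\mathbb{Z}$ and $PT_4\cong F_7$. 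Equivalently, the rank of $PT_4$ can be read off the Bass--Serre quotient graph: as $PT_4$ is normal and acts freely on the tree of $T_4=(\mathbb{Z}_2\times\mathbb{Z}_2)*\mathbb{Z}_2$, the two types of vertices number $24/|\pi\langle s_1,s_3\rangle|=6$ and $24/|\pi\langle s_2\rangle|=12$, the edges number $24$, and the rank is $24-(6+12)+1=7$.

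The conceptual steps above are short, so I expect the genuine obstacle to be the bookkeeping needed if one instead argues directly by Reidemeister--Schreier using the Schreier set $\Lambda_n$: one must list the $3\cdot 24=72$ generators $S_{\lambda,a}$ of $PT_4$, discard the trivial ones, and rewrite the $4\cdot 24$ relators coming from $s_1^2,s_2^2,s_3^2,[s_1,s_3]$, checking that after eliminating redundancies exactly seven free generators survive. I would therefore present the Euler characteristic computation as the main line and invoke the Reidemeister--Schreier data only to exhibit explicit free generators. A geometric alternative, realising $PT_4$ as the fundamental group of the $8$-punctured $2$-sphere (whose $\pi_1$ is $F_7$), would independently confirm the rank.
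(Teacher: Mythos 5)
Your proof is correct, but it takes a genuinely different route from the paper. The paper proves this theorem by a direct Reidemeister--Schreier computation: it writes down the Schreier transversals $\Lambda_3$ (6 cosets) and $\Lambda_4=\Lambda_3\cup\Lambda_3 s_3\cup\Lambda_3 s_3s_2\cup\Lambda_3 s_3s_2s_1$ (24 cosets), identifies the non-trivial generators $S_{\lambda,a}$ explicitly, and then uses the rewritten relators (together with the identity $(s_2s_1)^3\big((s_2s_3)^3\big)^{s_1s_2s_1}(s_1s_2)^3=\big((s_3s_2)^3\big)^{s_1s_2}$) to cut the list down to one generator for $PT_3$ and seven free generators for $PT_4$. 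Your argument instead combines the free-product decompositions of \propref{p2.2} with the Kurosh subgroup theorem (torsion-freeness of $PT_n$ follows since every torsion element of a free product is conjugate into a factor, and each non-trivial element of a factor maps to a non-trivial permutation) and the rational Euler characteristic, $\chi(T_3)=0$ and $\chi(T_4)=-\tfrac14$, giving ranks $1$ and $7$ by multiplicativity; the Bass--Serre vertex/edge count is an equivalent packaging. Both arguments are sound. What your approach buys is a short, conceptual determination of the isomorphism type with essentially no bookkeeping, and it makes transparent why the answer is $7$; what it does not supply is the explicit list of seven free generators $(s_1s_2)^3,\ \big((s_1s_2)^3\big)^{s_3},\ldots,\big((s_2s_3)^3\big)^{s_1s_2}$, which the paper relies on later (for the generating sets of $PT_n$ in Theorem~\ref{pure-twin-gen} and for the faithful representation $\phi_4\colon T_4\to\Aut(F_7)$ in Proposition~\ref{pt4-faithful}). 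You correctly anticipate this and note that the Reidemeister--Schreier data would still be needed to exhibit the generators; the paper's third, geometric argument via the $8$-punctured sphere is the same confirmation you mention at the end.
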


\begin{proof}
By Reidemeister--Schreier  method \cite{MKS} the group $PT_3$ is generated by elements
$$
S_{\lambda,a} = \lambda a ~ (\overline{\lambda a})^{-1},\qquad \lambda \in \Lambda_3, \qquad a \in \{ s_1, s_{2} \},
$$
where
$$
\Lambda_3 = \{ 1, s_1, s_2, s_1 s_2, s_2 s_1, s_1 s_2 s_1 \}
$$
is the set of right coset representatives of the group $PT_3$ in $T_3$. It is easy to see that among elements of the form $S_{\lambda,a}$ the only non-trivial elements are
$$
S_{s_2s_1,s_2} = (s_2 s_1)^3~\textrm{and}~S_{s_1s_2s_1,s_2} = (s_1 s_2)^3,
$$
which are inverses of each other. Hence, $PT_3$ is generated by one element.
\par

Next we find the defining relations of $PT_3$.  Considering the relation $r_2 = s_2^2$ we get
$$
s_2 s_1 r_2 s_1^{-1} s_2^{-1} = S_{s_2s_1,s_2} S_{s_1s_2s_1,s_2},
$$
and
$$
s_1 s_2 s_1 r_2 s_1^{-1} s_2^{-1} s_1^{-1} = S_{s_1s_2s_1,s_2} S_{s_2s_1,s_2}.
$$
Hence, $S_{s_2s_1,s_2} = S_{s_1s_2s_1,s_2}^{-1}$ and
$$
PT_3 = \langle (s_1 s_2)^3\rangle \cong \mathbb{Z}.
$$
\par

Now we consider  $n=4$. The group $PT_4$ is generated by elements
$$
S_{\lambda,a} = \lambda a ~ (\overline{\lambda a})^{-1},\quad \lambda \in \Lambda_4,\quad  a \in \{ s_1, s_{2}, s_3 \},
$$
where
$$
\Lambda_4 = \Lambda_3 \, \cup \, \Lambda_3 s_3 \, \cup \, \Lambda_3 s_3 s_2 \, \cup \, \Lambda_3 s_3 s_2 s_1
$$
is the set of right coset representatives of $PT_4$ in $T_4$. Straightforward calculations show that the following generators of $PT_4$ that do not lie in $PT_3$ are non-trivial:
$$
\begin{gathered}
S_{s_3s_2,s_3} = (s_3 s_2)^3,  \quad S_{s_1s_3s_2,s_3} = \left( (s_3 s_2)^3 \right)^{s_1}, \quad S_{s_2s_3s_2,s_3} = (s_2 s_3)^3, \\
S_{s_1s_2s_3s_2,s_3} = \left( (s_2 s_3)^3 \right)^{s_1}, \quad  S_{s_2s_1s_3s_2,s_3} = (s_2 s_1)^3\big((s_2 s_3)^3\big)^{s_1 s_2 s_1} , \\
S_{s_1s_2s_1s_3s_2,s_3} =  (s_1 s_2)^3\big((s_2 s_3)^3\big)^{s_1s_2}, \quad  S_{s_3s_2s_1,s_2}  = \left( (s_2 s_1)^3 \right)^{s_3}, \\
S_{s_3s_2s_1,s_3} =  (s_3 s_2)^3, \quad  S_{s_1s_3s_2s_1,s_2} = \left( (s_1 s_2)^3 \right)^{s_3}, \quad  S_{s_1s_3s_2s_1,s_3} = \left( (s_3 s_2)^3 \right)^{s_1}, \\ S_{s_2s_3s_2s_1,s_2} = \left( (s_2 s_1)^3 \right)^{s_3 s_2}, \quad S_{s_2s_3s_2s_1,s_3} =  (s_2 s_3)^3, \\
S_{s_1s_2s_3s_2s_1,s_2} = \left( (s_2 s_1)^3 \right)^{s_3 s_2 s_1}, \quad S_{s_1s_2s_3s_2s_1,s_3} = \left( (s_2 s_3)^3 \right)^{s_1}, \\
S_{s_2s_1s_3s_2s_1,s_2} = \big( (s_1 s_2)^3\big)^{s_3s_2}, \quad  S_{s_2s_1s_3s_2s_1,s_3} = (s_2 s_1)^3\big((s_2 s_3)^3\big)^{s_1s_2s_1}, \\ S_{s_1s_2s_1s_3s_2s_1,s_2} = \left( (s_1 s_2)^3 \right)^{s_3 s_2 s_1}, \quad
S_{s_1s_2s_1s_3s_2s_1,s_3} =  (s_1 s_2)^3\big((s_2 s_3)^3\big)^{s_1s_2}.
\end{gathered}
$$
After removing the redundant generators from the preceding list and using the fact that the element $(s_1 s_2)^3 \in PT_3$ is also a generator of $PT_4$, the complete list of generators of $PT_4$ can be rewritten as
$$
(s_1 s_2)^3,\quad \left( (s_1 s_2)^3 \right)^{s_3}, \quad \big( (s_1 s_2)^3\big)^{s_3s_2},\quad \left( (s_1 s_2)^3 \right)^{s_3 s_2 s_1},$$
$$ \quad (s_2 s_3)^3,  \quad  \left((s_2 s_3)^3 \right)^{s_1}, \quad \big((s_2 s_3)^3\big)^{s_1s_2}, \quad \big((s_2 s_3)^3\big)^{s_1 s_2 s_1}.$$

But, we have
\begin{equation}\label{equation-generalize}
(s_2 s_1)^3 \big((s_2 s_3)^3\big)^{s_1 s_2 s_1} (s_1 s_2)^3 = \big((s_3 s_2)^3\big)^{s_1s_2},
\end{equation}
and hence $PT_4$ is generated by 7 elements. Considering the defining relations, we see that $PT_4 \cong F_7$, where $F_7$ is the free group on the elements
$$
(s_1 s_2)^3,\quad \left( (s_1 s_2)^3 \right)^{s_3}, \quad \big( (s_1 s_2)^3\big)^{s_3s_2},\quad \left( (s_1 s_2)^3 \right)^{s_3 s_2 s_1},$$
$$ \quad (s_2 s_3)^3,  \quad  \left((s_2 s_3)^3 \right)^{s_1}, \quad \big((s_2 s_3)^3\big)^{s_1s_2}.$$
This completes the proof.
\end{proof}
\par

\begin{theorem}\label{pure-twin-not-free}
The pure twin group $PT_n$ is not free for $n \ge 6$. Further, $PT_n$ is torsion-free for $n \ge 3$.
\end{theorem}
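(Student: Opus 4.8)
The plan is to establish two independent statements: non-freeness of $PT_n$ for $n\ge 6$, and torsion-freeness of $PT_n$ for $n\ge 3$. I would treat torsion-freeness first, since it is the cleaner of the two and sets up useful structural facts. The key observation is that $T_n$ is a Coxeter group, and in any Coxeter group every element of finite order is conjugate into a finite standard parabolic subgroup; equivalently, every finite subgroup is conjugate into a finite parabolic. Thus any nontrivial torsion element of $PT_n$ would have to be conjugate (inside $T_n$) to an element of a finite parabolic $\langle s_{i_1},\ldots,s_{i_k}\rangle$. For the twin group the only relations among the $s_i$ beyond $s_i^2=1$ are the commuting relations \eqnref{r2}, so each finite standard parabolic is an elementary abelian $2$-group $(\mathbb{Z}_2)^k$ generated by \emph{pairwise commuting} involutions $s_{i_1},\ldots,s_{i_k}$ with $|i_a-i_b|\ge 2$. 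Every nontrivial element of such a parabolic is a product of an odd-or-even number of commuting transpositions, and I would check that its image under $\pi:T_n\to S_n$ is a nontrivial product of disjoint transpositions, hence does not lie in $PT_n=\ker\pi$. Since conjugation by $T_n$ commutes with $\pi$ up to conjugation in $S_n$, no nontrivial torsion element can survive in $PT_n$; this gives torsion-freeness for all $n\ge 3$ (indeed for all $n\ge 2$).

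For non-freeness when $n\ge 6$, the plan is to exhibit inside $PT_n$ a subgroup that cannot occur in a free group, namely a free abelian group of rank $2$, i.e. a copy of $\mathbb{Z}\times\mathbb{Z}$. The natural candidates are the ``commuting Borromean'' elements: recall that $(s_is_{i+1})^3\in PT_n$ for each admissible $i$, as noted before Figure~\ref{fig3}. I would take two such elements supported on disjoint, non-adjacent strand-ranges, for instance $x=(s_1s_2)^3$ and $y=(s_4s_5)^3$, which exist precisely because $n\ge 6$. Using the commuting relations \eqnref{r2}, the generators $s_1,s_2$ commute with $s_4,s_5$, so $x$ and $y$ commute in $T_n$. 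It then remains to show that $x$ and $y$ generate a genuine $\mathbb{Z}\times\mathbb{Z}$ rather than collapsing: each of $x,y$ has infinite order (this already follows from $PT_3\cong\mathbb{Z}$ applied to the relevant parabolic $T_3$, since $(s_1s_2)^3$ generates it), and they are independent because they project to distinct, ``orthogonal'' parabolic subgroups $\langle s_1,s_2\rangle$ and $\langle s_4,s_5\rangle$ whose intersection is trivial. A free group contains no $\mathbb{Z}\times\mathbb{Z}$, so $PT_n$ is not free.

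The main obstacle will be the independence claim in the non-freeness argument: showing that $\langle x,y\rangle$ is honestly $\mathbb{Z}^2$ and not, say, an infinite cyclic group or a group with an unexpected relation. I expect to resolve this by a projection/retraction argument. The subgroup $H=\langle s_1,s_2,s_4,s_5\rangle$ is a standard parabolic of $T_n$, and by the commuting relations it splits as $\langle s_1,s_2\rangle * \langle s_4,s_5\rangle$ amalgamated trivially inside a direct-product structure; more precisely $H\cong T_3\times T_3$ since the two blocks commute and are otherwise unrelated. Under the isomorphism $H\cong T_3\times T_3$, the element $x$ lands in the first factor and $y$ in the second, and each generates the infinite cyclic $PT_3$ of its factor by \thmref{pure-twin-n23}. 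Hence $\langle x,y\rangle\cong PT_3\times PT_3\cong\mathbb{Z}\times\mathbb{Z}$, embedded in $PT_n$. For the torsion-freeness half, the only point requiring care is the appeal to the Coxeter-group finite-subgroup classification; I would cite the standard fact that finite subgroups of Coxeter groups are conjugate into finite parabolics and verify the elementary claim that every nontrivial element of a finite twin-parabolic maps to a nontrivial permutation, which is a short direct check.
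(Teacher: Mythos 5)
Your proof is correct, but it takes a genuinely different route from the one in the paper. The paper's proof of both halves is topological: it invokes Khovanov's identification of $PT_n$ with $\pi_1(X_n)$ for the finite-dimensional Eilenberg--Maclane space $X_n$, then quotes Bj\"{o}rner--Welker's computation that $H^i(X_n,\mathbb{Z})\neq 0$ exactly for $0\le i\le n/3$ --- so a free $PT_n$ would force $n<6$ --- and deduces torsion-freeness from finiteness of the cohomological dimension. Your argument is instead purely group-theoretic. For non-freeness you embed $\mathbb{Z}\times\mathbb{Z}$ via the commuting elements $(s_1s_2)^3$ and $(s_4s_5)^3$; the paper itself records essentially this observation as an \emph{alternate} proof in a remark following Theorem~\ref{pure-twin-gen}, though there the two elements are obtained from the Reidemeister--Schreier generating set rather than, as you do, from the standard fact that the parabolic $\langle s_1,s_2,s_4,s_5\rangle$ carries the restricted Coxeter presentation and is therefore $T_3\times T_3$. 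Your torsion-freeness argument (torsion elements of a Coxeter group are conjugate into finite standard parabolics, which for $T_n$ are elementary abelian $2$-groups injecting into $S_n$ under $\pi$, so they meet the normal subgroup $PT_n$ trivially) does not appear in the paper at all and is arguably more self-contained: it avoids the $K(\pi,1)$ machinery entirely, at the cost of citing Tits' classification of finite subgroups of Coxeter groups, and it yields the conclusion uniformly for all $n\ge 2$ rather than splitting off the cases $n=3,4$ as the paper does. Both halves of your plan hold up; the only point worth tightening is the slightly muddled phrase describing $\langle s_1,s_2,s_4,s_5\rangle$ as a free product ``inside a direct-product structure'' --- it is simply the direct product $\langle s_1,s_2\rangle\times\langle s_4,s_5\rangle$ by the parabolic-subgroup theorem.
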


\begin{proof}
In \cite{Kh96}, Khovanov proved that $PT_n$ is isomorphic to the fundamental group of the space $X_n$ which is an Eilenberg--Maclane space. Further, Bj\"{o}rner and Welker \cite{BW} proved that each $H^i
(X_n, \mathbb{Z})$ is free and $H^i (X_n, \mathbb{Z}) \neq 0$ iff $0 \le i \le n/3$. If $PT_n$ is free, then $X_n$ is homotopy equivalent to a wedge of circles, and hence $H^i (X_n, \mathbb{Z}) \neq 0$ iff $0 \le i < 2$. Thus, if $PT_n$ is free, then $n<6$.
\par

The second assertion follows from Theorem \ref{pure-twin-n23} for $n=3,4$. For the general case, recall from \cite{BW} that the Eilenberg--Maclane space $X_n$ of $PT_n$ is finite dimensional. Therefore, $PT_n$ is of finite cohomological dimension, and hence torsion-free.
\end{proof}
\par

\begin{remark}
Bj\"{o}rner and Welker \cite{BW} proved that $H^1(X_n, \mathbb{Z})$ has rank $\sum_{i=3}^n \binom{n}{i}\binom{i-1}{2}$. For $n=3, 4$, the formula yields the ranks 1, 7, respectively, which agrees with Theorem \ref{pure-twin-n23}. For $n=5$, $H^1 (X_5, \mathbb{Z})$ is a free abelian group of rank 31 and $H^i (X_5, \mathbb{Z}) = 0$ for $i \ge 2$.
\end{remark}
\par
In view of the preceding remark we propose the following:

\par

\begin{conjecture}
The pure twin group $PT_5$ is a free group of rank 31.
\end{conjecture}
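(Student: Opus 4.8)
The plan is to bypass the cohomology of $X_5$ entirely and argue group-theoretically from the amalgamated decomposition of $T_5$. The temptation is to use that $H^i(X_5;\mathbb{Z})=0$ for $i\ge 2$, but this only bounds the cohomological dimension of $PT_5$ with respect to \emph{trivial} coefficients; it does not force $\mathrm{cd}(PT_5)\le 1$ (which would require vanishing of $H^2(PT_5,M)$ for all modules $M$), so Stallings--Swan is not directly available. I would therefore set the topological picture aside and exploit the splitting of $T_5$ as a tree of groups.

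By \thmref{t2.3}, used exactly as in the proof of \corref{center-twin}, we have $T_5 = A *_{T_2} B$ with $A=\langle s_1,s_3,s_4\rangle\cong \mathbb{Z}_2\times T_3$, $B=\langle s_2,t_4\rangle\cong \mathbb{Z}_2\times\mathbb{Z}_2$, and amalgamated subgroup $C=T_2\cong\mathbb{Z}_2$. First I would let $T_5$ act on the associated Bass--Serre tree $\mathcal{T}$, whose edge stabilizers are the conjugates of $C$ and whose vertex stabilizers are the conjugates of $A$ and $B$. Restricting this action to the finite-index subgroup $PT_5$ and invoking the fundamental theorem of Bass--Serre theory presents $PT_5$ as the fundamental group of a graph of groups with edge groups $PT_5\cap gCg^{-1}$ and vertex groups $PT_5\cap gAg^{-1}$ and $PT_5\cap gBg^{-1}$.

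The crux is that every edge group is trivial: since $C\cong\mathbb{Z}_2$ is torsion and $PT_5$ is torsion-free by \thmref{pure-twin-not-free}, each $PT_5\cap gCg^{-1}$ is a torsion-free subgroup of $\mathbb{Z}_2$, hence $1$. A group acting on a tree with trivial edge stabilizers is the free product of its vertex stabilizers with a free group, so it remains to check that each vertex group is free. The conjugates $gBg^{-1}\cong\mathbb{Z}_2\times\mathbb{Z}_2$ are finite, so $PT_5\cap gBg^{-1}=1$; and $A\cong\mathbb{Z}_2\times(\mathbb{Z}_2*\mathbb{Z}_2)$ is virtually infinite cyclic, since it contains $\mathbb{Z}_2\times\mathbb{Z}$ with index $2$, whence any torsion-free subgroup $PT_5\cap gAg^{-1}$ is either trivial or infinite cyclic. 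Thus $PT_5$ is a free product of copies of $\mathbb{Z}$ (and trivial groups) with a free group, and is therefore free. As a finite-index subgroup of the finitely generated group $T_5$ it is finitely generated, so its rank equals $\mathrm{rank}\,H_1(PT_5;\mathbb{Z})=\mathrm{rank}\,H^1(X_5;\mathbb{Z})=31$ by Bj\"orner--Welker, giving $PT_5\cong F_{31}$.

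The main obstacle, and the feature that makes the argument special to $n=5$, is the finiteness of the amalgamated subgroup: in \thmref{t2.3} the edge group is $T_{n-3}$, which equals $\mathbb{Z}_2$ precisely when $n=5$ and becomes infinite (indeed $\cong D_\infty$) for $n=6$, at which point the edge stabilizers $PT_n\cap gT_{n-3}g^{-1}$ can be nontrivial and the graph of groups need no longer be free --- consistent with \thmref{pure-twin-not-free}. The remaining work is bookkeeping: verifying carefully that $A$ is genuinely virtually cyclic so that its torsion-free subgroups are cyclic, and using finite generation to pin the rank at $31$. As an independent cross-check one could instead run Reidemeister--Schreier on the index-$120$ subgroup $PT_5\le T_5$ and reduce the resulting presentation by Tietze transformations to a free presentation of rank $31$, mirroring the computation for $PT_4$ in \thmref{pure-twin-n23}.
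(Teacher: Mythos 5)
The statement you are proving is stated in the paper only as a conjecture; the authors give no argument for it beyond the consistency check that $H^{1}(X_5,\mathbb{Z})$ has rank $31$ and $H^{i}(X_5,\mathbb{Z})=0$ for $i\ge 2$. Your proposal is, as far as I can see, a correct and essentially complete proof, and it goes well beyond anything in the paper. Your opening caveat is also well taken: vanishing of $H^{i}(X_5;\mathbb{Z})$ for $i\ge2$ with trivial coefficients does not by itself give $\mathrm{cd}(PT_5)\le 1$, so Stallings--Swan cannot be invoked directly, and the detour through the splitting $T_5=A*_{T_2}B$ of \corref{center-twin} is exactly the right move. Every step checks out: the edge groups $PT_5\cap gCg^{-1}$ are trivial, the $B$-type vertex groups are trivial, and the $A$-type vertex groups are trivial or infinite cyclic because $A\cong\mathbb{Z}_2\times D_\infty$ is virtually $\mathbb{Z}$ (or, more elementarily, because projection to the $D_\infty$ factor is injective on any torsion-free subgroup). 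The failure of the argument for $n\ge 6$, where the amalgamating subgroup $T_{n-3}$ becomes infinite, is correctly diagnosed.

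Two remarks that tighten the argument. First, you do not need \thmref{pure-twin-not-free} at all: since $PT_5$ is \emph{normal} in $T_5$, one has $PT_5\cap gAg^{-1}=g(PT_5\cap A)g^{-1}$ and likewise for $B$ and $C$, and the intersections with $A=\langle s_1,s_3,s_4\rangle$, $B=\langle s_2,s_4\rangle$, $C=\langle s_4\rangle$ are computed directly from the permutation images: $\pi|_C$ and $\pi|_B$ are injective, so those stabilizers vanish, while $\ker(\pi|_A)=\langle (s_3s_4)^3\rangle\cong\mathbb{Z}$. Second, the same normality lets you compute the rank without appealing to Bj\"orner--Welker: the quotient graph $\mathcal{T}/PT_5$ has $|S_5|/|\pi(A)|=120/12=10$ vertices of type $A$ (each contributing a $\mathbb{Z}$), $120/4=30$ vertices of type $B$ (contributing nothing), and $120/2=60$ edges, so
$$
PT_5\;\cong\;\Big(\ast_{10}\,\mathbb{Z}\Big)\ast F_{60-40+1}\;\cong\;F_{10+21}\;=\;F_{31}.
$$
This independently recovers the number $31$ and turns your final bookkeeping step into a purely combinatorial count. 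The only inputs left are standard Bass--Serre theory (the action on the tree of the amalgam is without inversions, and a group acting on a tree with trivial edge stabilizers is the free product of one vertex stabilizer per orbit with a free group); with those granted, the conjecture is settled.
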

\vspace*{1mm}

\subsection{A geometric proof of freeness of $PT_4$}

We demonstrate some geometrical properties of $PT_{4}$ and obtain another proof of its freeness. More precisely, we show that $PT_4$ can be realized as the fundamental group of the $8$-punctured 2-sphere.  As in the proof of Theorem~\ref{pure-twin-n23}, the group $PT_{4}$ is generated by elements
$$
S_{\lambda, a} = (\lambda  a)  (\overline{\lambda a})^{-1}, \quad \lambda \in \Lambda_{4}, \quad a \in \{s_{1}, s_{2}, s_{3} \},
$$
where
\begin{equation}\label{geom-pt4}
\Lambda_{4} = \Lambda_{3} \cup \Lambda_{3}  s_{3} \cup \Lambda_{3}  (s_{3} s_{2}) \cup \Lambda_{3}  (s_{3} s_{2} s_{1})
\end{equation}
and
$$
\Lambda_{3} = \big\{ s_{0} = 1, s_{1}, s_{2}, s_{1} s_{2}, s_{2} s_{1},  s_{1} s_{2} s_{1} \big\}.
$$

Observe that $T_{4}$ is a Coxeter group generated by reflections in sides of a triangle $D_{0}$ with angles $\pi/2$, $0$ and $0$ which can be realised in a hyperbolic plane $\mathbb H^{2}$, see Figure~\ref{fig4} for the picture in the upper half-plane model. Here two vertices lie in $\partial \mathbb H^{2}$. Such a vertex is usually called an \emph{ideal} vertex.
\begin{figure}[!ht]
\centering
\unitlength=0.3mm
\begin{picture}(0,110)(0,-10)
\thicklines
\qbezier(-80,0)(80,0)(80,0)
\put(0,-10){\makebox(0,0)[cc]{$0$}}
\put(40,-10){\makebox(0,0)[cc]{$1$}}
\put(80,-10){\makebox(0,0)[cc]{$x$}}
\put(75,0){\vector(1,0){10}}
\qbezier(0,40)(0,40)(0,100)
\qbezier(40,0)(40,0)(40,100)
\qbezier(0,40)(32,32)(40,0)
\qbezier(0,44)(0,44)(4,44)
\qbezier(4,44)(4,44)(4,40)
\put(20,60){\makebox(0,0)[cc]{$D_{0}$}}
\put(-10,70){\makebox(0,0)[cc]{$s_{1}$}}
\put(50,70){\makebox(0,0)[cc]{$s_{2}$}}
\put(20,20){\makebox(0,0)[cc]{$s_{3}$}}
\end{picture}
\caption{Triangle $D_{0}$.} \label{fig4}
\end{figure}
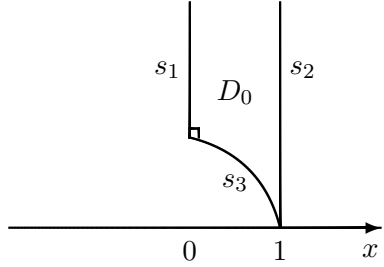

In this case $s_{1}$ and $s_{3}$ are reflections in sides of $D_{0}$ which form an angle $\pi/2$, and $s_{2}$ is the reflection in the third side that meets the two other sides at the point at the absolute $\partial \mathbb H^{2}$. Denote by $D_{0}^{1}, D_{0}^{2}$ and $D_{0}^{3}$ sides of $D_0$ corresponding to reflections $s_{1}$, $s_{2}$ and $s_{3}$, respectively.

Since $D_{0}$ is a fundamental domain of $T_{4}$ in $\mathbb H^{2}$, a fundamental domain of its subgroup $PT_{4}$ consists of $|S_{4}|=24$ copies of $D_{0}$ glued all together by the action of $\Lambda_{4}$. We denote a fundamental domain of $PT_{4}$ by $\mathcal {PD}$:
$$
\mathcal {PD} = \bigcup_{g \in \Lambda_{4}} ~g(D_{0}).
$$
For a generator $s_{i}$ and a domain $D_{k}$, we denote $s_{i}(D_{k}) = D_{ik}$; for two generators $s_{i}$ and $s_{j}$ and a domain $D_{k}$, we denote $s_{i} s_{j} (D_{k}) = s_{j} (s_{i} (D_{k})) = s_{j} (D_{ik}) = D_{jik}$. Further these notations are generalised in the natural way. We denote
\begin{eqnarray*}
\mathcal D_{0} = \bigcup_{g \in \Lambda_{3}} g(D_{0})  & = &  \big\{ D_{0}, D_{1} = s_{1} (D_{0}), D_{2} = s_{2} (D_{0}), D_{21} = s_{1} s_{2} (D_{0}) = s_{2} (s_{1} (D_{0})), \cr  & &  D_{12} = s_{2} s_{1} (D_{0}) = s_{1} (s_{2} (D_{0})), D_{121} = s_{1} s_{2} s_{1} (D_{0}) = s_{1} (s_{2} (s_{1} (D_{0}))) \big\}.
\end{eqnarray*}

Since $\Lambda_{4}$ is a union of four sets as in \eqref{geom-pt4}, we present $\mathcal {PD}$ as the union of the following four 6-element sets
$$
\mathcal {PD} = \mathcal D_{0} \cup s_{3 }\mathcal D_{0}  \cup s_{3} s_{2} \mathcal D_{0} \cup s_{3} s_{2} s_{1} \mathcal D_{0} = \mathcal D_{0} \cup \mathcal D_{3} \cup \mathcal D_{23} \cup \mathcal D_{123},
$$
where each set consists of 6 triangles $D_{j}$ with multi-indices $j$ given as follows:
$$
\mathcal D_{3} = \big\{ D_{3}, D_{31} , D_{32} , D_{312} , D_{321} , D_{3121} \big\},
$$
$$
\mathcal D_{23} = \big\{ D_{23}, D_{231} , D_{232} , D_{2312} , D_{2321} , D_{23121} \big\},
$$
$$
\mathcal D_{123} = \big\{ D_{123}, D_{1231} , D_{1232} , D_{12312} , D_{12321} , D_{123121} \big\}.
$$
Analogous to the case of $D_{0}$, for each triangle $D_{j} \in \mathcal {PD}$ with multi-index  $j$, we denote by $D_{j}^{1}$, $D_{j}^{2}$ and $D_{j}^{3}$ its sides, where we follow the same rule as for $D$ but considering instead of $s_{1}$, $s_{2}$ and $s_{3}$ their conjugates.

Any element $S_{\lambda,a} = (\lambda  a)  (\overline{\lambda a})^{-1} \in \Lambda_{4}$ can be written in the form $s_{j} s_{i} s_{k}^{-1}$, where $s_{i} \in \{s_{1}, s_{2},  s_{3}\}$, and $j$, $k$ are such multi-indices, that $s_{j}, s_{k} \in \Lambda_{4}$. These elements identify side $D_{k}^{i}$ with side $D_{j}^{i}$.  In particular, for $a \in \{ s_{1}, s_{2} \}$ and $\lambda \in \Lambda_{3}$, we have the following side identifications:
\begin{equation} \label{glue1}
D_{1}^{1} \to D_{0}^{1}, \quad D_{12}^{1} \to D_{2}^{1}, \quad D_{121}^{1} \to D_{21}^{1}, \quad \text{and} \quad
D_{2}^{2} \to D_{0}^{2}, \quad D_{21}^{2} \to D_{1}^{2}, \quad D_{121}^{2} \to D_{12}^{2}.
\end{equation}
The set
$$
\mathcal D_{0} = \big\{ S_{\lambda, a} (D_{0}) \, | \, \lambda \in \Lambda_{3}, a \in \{ s_{1}, s_{2}\} \big\}
$$
is presented in Figure~\ref{fig5}, where left and right sides should be identified by $(s_{2} s_{1}) s_{2} (s_{1} s_{2} s_{1})^{-1}$.
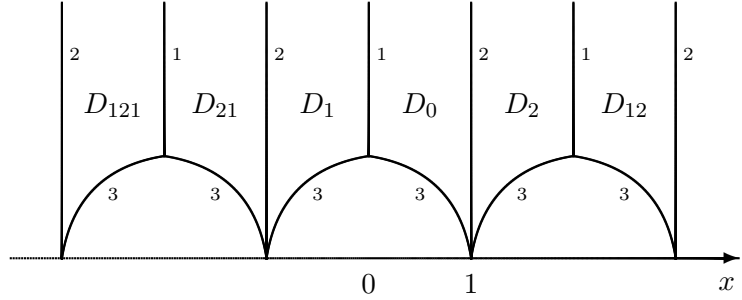
\begin{figure}[!ht]
\centering
\unitlength=0.34mm
\begin{picture}(0,120)(0,-10)
\thicklines
\qbezier(-140,0)(140,0)(140,0)
\put(0,-10){\makebox(0,0)[cc]{$0$}}
\put(40,-10){\makebox(0,0)[cc]{$1$}}
\put(140,-10){\makebox(0,0)[cc]{$x$}}
\put(135,0){\vector(1,0){10}}
\qbezier(-120,0)(-120,0)(-120,100)
\qbezier(-80,40)(-80,40)(-80,100)
\qbezier(-40,0)(-40,0)(-40,100)
\qbezier(0,40)(0,40)(0,100)
\qbezier(40,0)(40,0)(40,100)
\qbezier(80,40)(80,40)(80,100)
\qbezier(120,0)(120,0)(120,100)
\qbezier(-120,0)(-114.6,34.6)(-80,40)
\qbezier(-40,0)(-34.6,34.6)(0,40)
\qbezier(40,0)(45.4,34.6)(80,40)
\qbezier(-80,40)(-45.4,34.6)(-40,0)
\qbezier(0,40)(34.6,34.6)(40,0)
\qbezier(80,40)(114.6,34.6)(120,0)
\put(-100,60){\makebox(0,0)[cc]{$D_{121}$}}
\put(-60,60){\makebox(0,0)[cc]{$D_{21}$}}
\put(-20,60){\makebox(0,0)[cc]{$D_{1}$}}
\put(20,60){\makebox(0,0)[cc]{$D_{0}$}}
\put(60,60){\makebox(0,0)[cc]{$D_{2}$}}
\put(100,60){\makebox(0,0)[cc]{$D_{12}$}}
\put(-115,80){\makebox(0,0)[cc]{\tiny 2}}
\put(-75,80){\makebox(0,0)[cc]{\tiny 1}}
\put(-35,80){\makebox(0,0)[cc]{\tiny 2}}
\put(5,80){\makebox(0,0)[cc]{\tiny 1}}
\put(45,80){\makebox(0,0)[cc]{\tiny 2}}
\put(85,80){\makebox(0,0)[cc]{\tiny1}}
\put(125,80){\makebox(0,0)[cc]{\tiny 2}}
\put(-100,25){\makebox(0,0)[cc]{\tiny 3}}
\put(-60,25){\makebox(0,0)[cc]{\tiny 3}}
\put(-20,25){\makebox(0,0)[cc]{\tiny 3}}
\put(20,25){\makebox(0,0)[cc]{\tiny 3}}
\put(60,25){\makebox(0,0)[cc]{\tiny 3}}
\put(100,25){\makebox(0,0)[cc]{\tiny 3}}
\end{picture}
\caption{The set $\mathcal D_{0}$.} \label{fig5}
\end{figure}

The case $\lambda \in \Lambda_{3}$ and $a=s_{3}$ gives six pairings
\begin{equation} \label{glue2}
D_{3}^{3} \to D_{0}^{3}, \quad D_{31}^{3} \to D_{1}^{3}, \quad D_{32}^{3} \to D_{2}^{3}, \quad D_{321}^{3} \to D_{21}^{3}, \quad D_{312}^{3} \to D_{12}^{3}, \quad D_{3121}^{3} \to D_{121}^{3}.
\end{equation}
The case $\lambda \in \Lambda_{3} s_{3}$ and $a=s_{1}$ gives three pairings
\begin{equation} \label{glue3}
D_{31}^{1} \to D_{3}^{1}, \quad D_{312}^{1} \to D_{32}^{1}, \quad D_{3121}^{1} \to D_{321}^{1}.
\end{equation}
The gluing given by (\ref{glue1}), (\ref{glue2}) and (\ref{glue3}) leads to the surface $\mathcal A$ which is schematically presented in Figure~\ref{fig6}. Here ideal vertices are indicated by black circles, and other vertices are finite, where four $\pi/2$-angles meet.
\begin{figure}[!ht]
\centering
\unitlength=0.5mm
\begin{picture}(120,120)(-60,0)
\thicklines
\qbezier(0,0)(0,0)(0,120)
\qbezier(0,0)(0,0)(-60,30)
\qbezier(-60,30)(-60,30)(-60,90)
\qbezier(-60,90)(-60,90)(0,120)
\qbezier(0,0)(0,0)(60,30)
\qbezier(60,30)(60,30)(60,90)
\qbezier(60,90)(60,90)(0,120)
\qbezier(0,0)(0,0)(-60,90)
\qbezier(0,0)(0,0)(60,90)
\qbezier(-60,90)(-60,90)(60,90)
\qbezier(0,60)(0,60)(-60,30)
\qbezier(0,60)(0,60)(60,30)
\qbezier(0,60)(0,60)(-60,90)
\qbezier(0,60)(0,60)(60,90)
\put(10,40){\makebox(0,0)[cc]{$D_{0}$}}
\put(-10,40){\makebox(0,0)[cc]{$D_{1}$}}
\put(-20,60){\makebox(0,0)[cc]{$D_{21}$}}
\put(-14,80){\makebox(0,0)[cc]{$D_{121}$}}
\put(14,80){\makebox(0,0)[cc]{$D_{12}$}}
\put(20,60){\makebox(0,0)[cc]{$D_{2}$}}
\put(30,30){\makebox(0,0)[cc]{$D_{3}$}}
\put(-30,30){\makebox(0,0)[cc]{$D_{31}$}}
\put(-50,55){\makebox(0,0)[cc]{$D_{312}$}}
\put(-15,100){\makebox(0,0)[cc]{$D_{3121}$}}
\put(15,100){\makebox(0,0)[cc]{$D_{312}$}}
\put(50,55){\makebox(0,0)[cc]{$D_{32}$}}
\put(-30,110){\makebox(0,0)[cc]{\tiny 2}}
\put(30,110){\makebox(0,0)[cc]{\tiny 2}}
\put(-65,60){\makebox(0,0)[cc]{\tiny 2}}
\put(65,60){\makebox(0,0)[cc]{\tiny 2}}
\put(-30,10){\makebox(0,0)[cc]{\tiny 2}}
\put(30,10){\makebox(0,0)[cc]{\tiny2}}
\put(0,60){\circle*{4}}
\put(0,0){\circle*{4}}
\put(0,120){\circle*{4}}
\put(-60,30){\circle*{4}}
\put(60,30){\circle*{4}}
\put(-60,90){\circle*{4}}
\put(60,90){\circle*{4}}
\end{picture}
\caption{Surface $\mathcal A$.} \label{fig6}
\end{figure}
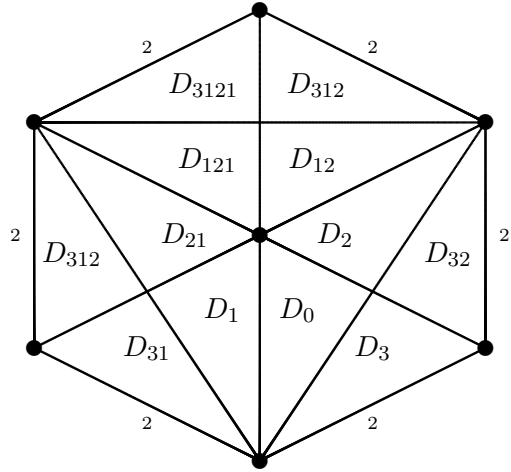

The boundary of the surface $\mathcal A$, presented in Figure~\ref{fig6}, is formed by six sides
\begin{equation} \label{glue4}
D_{3}^{2}, \quad D_{31}^{2}, \quad D_{321}^{2} \quad D_{3121}^{2}, \quad D_{312}^{2}, \quad D_{32}^{2}.
\end{equation}

Now consider further possibilities for $\lambda$ and $a$. The case $\lambda \in \Lambda_{3} s_{3} s_{2}$ and $a = s_{1}$ leads to pairwise identifications as follows:
\begin{equation} \label{glue21}
\begin{gathered}
D_{123}^{1} \to D_{23}^{1}, \qquad D_{1231}^{1} \to D_{231}^{1}, \qquad D_{1232}^{1} \to D_{232}^{1}, \cr  D_{12321}^{1} \to D_{2321}^{1}, \qquad D_{12312}^{1} \to D_{2312}^{1}, \qquad D_{123121}^{1} \to D_{23121}^{1}.
\end{gathered}
\end{equation}
The case $\lambda \in \Lambda_{3} s_{3} s_{2}$ and $a=s_{3}$ leads to the gluing
\begin{equation} \label{glue22}
D_{232}^{3} \to D_{23}^{3}, \qquad D_{2321}^{3} \to D_{231}^{3}, \qquad D_{23121}^{3} \to D_{2312}^{3};
\end{equation}
and the cases $\lambda \in \Lambda_{3} s_{3} s_{2} s_{1}$ with $a \in \{ s_{2}, s_{3} \}$ lead to gluing
\begin{equation} \label{glue23}
\begin{gathered}
D_{1231}^{2} \to D_{123}^{2}, \qquad D_{12312}^{2} \to D_{1232}^{2}, \qquad D_{123212}^{2} \to D_{12321}^{2}, \cr
D_{1232}^{3} \to D_{123}^{3}, \qquad  D_{12321}^{3} \to D_{1231}^{3}, \qquad D_{123121}^{3} \to D_{12312}^{3}.
\end{gathered}
\end{equation}
The gluing given by (\ref{glue21}), (\ref{glue22}) and (\ref{glue23}) lead to the surface $\mathcal B$ presented in Figure~\ref{fig7}.
\begin{figure}[!ht]
\centering
\unitlength=0.5mm
\begin{picture}(120,120)(-60,0)
\thicklines
\qbezier(0,0)(0,0)(0,120)
\qbezier(0,0)(0,0)(-60,30)
\qbezier(-60,30)(-60,30)(-60,90)
\qbezier(-60,90)(-60,90)(0,120)
\qbezier(0,0)(0,0)(60,30)
\qbezier(60,30)(60,30)(60,90)
\qbezier(60,90)(60,90)(0,120)
\qbezier(0,0)(0,0)(-60,90)
\qbezier(0,0)(0,0)(60,90)
\qbezier(-60,90)(-60,90)(60,90)
\qbezier(0,60)(0,60)(-60,30)
\qbezier(0,60)(0,60)(60,30)
\qbezier(0,60)(0,60)(-60,90)
\qbezier(0,60)(0,60)(60,90)
\put(-12,40){\makebox(0,0)[cc]{$D_{12312}$}}
\put(-20,60){\makebox(0,0)[cc]{$D_{123121}$}}
\put(-14,80){\makebox(0,0)[cc]{$D_{12321}$}}
\put(14,80){\makebox(0,0)[cc]{$D_{1231}$}}
\put(20,60){\makebox(0,0)[cc]{$D_{123}$}}
\put(12,40){\makebox(0,0)[cc]{$D_{1232}$}}
\put(-30,30){\makebox(0,0)[cc]{$D_{2312}$}}
\put(-50,55){\makebox(0,0)[cc]{$D_{23121}$}}
\put(-15,100){\makebox(0,0)[cc]{$D_{2321}$}}
\put(15,100){\makebox(0,0)[cc]{$D_{231}$}}
\put(50,55){\makebox(0,0)[cc]{$D_{23}$}}
\put(30,30){\makebox(0,0)[cc]{$D_{232}$}}
\put(-30,110){\makebox(0,0)[cc]{\tiny 2}}
\put(30,110){\makebox(0,0)[cc]{\tiny 2}}
\put(-65,60){\makebox(0,0)[cc]{\tiny 2}}
\put(65,60){\makebox(0,0)[cc]{\tiny 2}}
\put(-30,10){\makebox(0,0)[cc]{\tiny 2}}
\put(30,10){\makebox(0,0)[cc]{\tiny2}}
\put(0,60){\circle*{4}}
\put(0,0){\circle*{4}}
\put(0,120){\circle*{4}}
\put(-60,30){\circle*{4}}
\put(60,30){\circle*{4}}
\put(-60,90){\circle*{4}}
\put(60,90){\circle*{4}}
\end{picture}
\caption{Surface $\mathcal B$.} \label{fig7}
\end{figure}
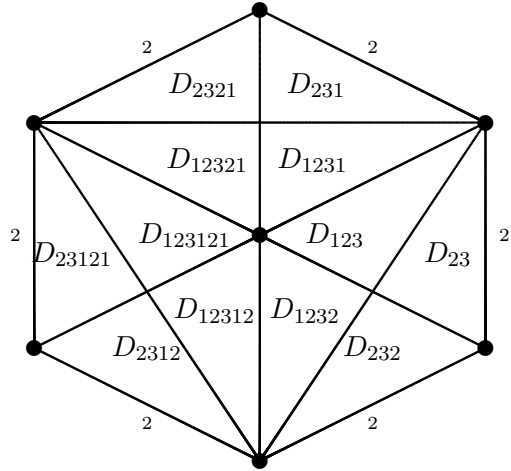

The boundary of the surface $\mathcal B$, presented in Figure~\ref{fig7}, is formed by six sides
\begin{equation} \label{glue24}
D_{2312}^{2}, \quad D_{23123}^{2}, \quad D_{2321}^{2} \quad D_{231}^{2}, \quad D_{23}^{2}, \quad D_{232}^{2}.
\end{equation}
Sides listed in (\ref{glue4}) and (\ref{glue24}) are pairwise glued by generators of $PT_{4}$ which appear in the case $\lambda \in \Lambda_{3} s_{3}$ and $a = s_{2}$
\begin{equation}\label{glue25}
\begin{gathered}
D_{23}^{2} \to D_{3}^{2}, \qquad D_{231}^{2} \to D_{31}^{2}, \qquad D_{232}^{2} \to D_{32}^{2}, \cr D_{2321}^{2} \to D_{321}^{2}, \qquad D_{2312}^{2} \to D_{312}^{2}, \qquad D_{23121}^{2} \to D_{3121}^{2}.
\end{gathered}
\end{equation}
Thus, after gluing boundary sides by (\ref{glue25}), we get the surface $\mathcal C = \mathcal A \cup \mathcal B$ that is homeomorphic to the 2-sphere with eight punctures corresponding to ideal vertices. By the preceding construction $PT_{4} = \pi_{1} (\mathcal C)$, and hence $PT_4$ is a free group of rank seven.
\vspace*{1mm}

\section{Generators of pure twin groups} \label{sec4}
In this section, we determine generators of $PT_n$ and give an upper bound on its rank.
\par

\subsection{Generators and rank of $T_n$}
We begin with the following useful result which is a consequence of direct computations using relations in $T_n$.

\begin{lemma}\label{key-lemma}
The following holds:
\begin{enumerate}
\item If $~2 \le l \le n-1$, then $$s_{l-1}\big((s_{l-1} s_l)^3 \big)^{s_{l+1}s_{l+2}\ldots s_{n-1}}=\big((s_ls_{l-1})^3 \big)^{s_{l+1}s_{l+2}\ldots s_{n-1}} s_{l-1}.$$
\item If $~ i_{n-1}+1< l \le n-1$, then $$m_{n-1,i_{n-1}} s_l ~m_{n-1,i_{n-1}}^{-1} s_{l-1}= \big((s_l s_{l-1})^3 \big)^{s_{l+1}s_{l+2}\ldots s_{n-1}}.$$
\item If $~l=n-1$, then $$m_{n-1,i_{n-1}} s_l ~m_{n-1,i_{n-1}}^{-1} s_{l-1}= (s_l s_{l-1})^3.$$
\item  If $~ i_{n-1}+1< l \le n-1$, then $$\overline{m_{n-1,i_{n-1}}~ s_l} = \overline{s_{l-1}~m_{n-1,i_{n-1}} }.$$
\end{enumerate}
\end{lemma}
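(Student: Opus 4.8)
The plan is to deduce all four assertions directly from the two families of defining relations of $T_n$, namely $s_i^2=1$ and $s_is_j=s_js_i$ for $|i-j|\ge 2$; no structural results are needed. I would prove (1) as a purely local identity, obtain (2) and (3) by conjugating $s_l$ through the descending word $m_{n-1,i_{n-1}}=s_{n-1}s_{n-2}\cdots s_{i_{n-1}+1}$, and finally derive (4) from (2) and (3) by projecting to $S_n$.

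For (1), put $W=s_{l+1}s_{l+2}\cdots s_{n-1}$ and observe that $s_{l-1}$ commutes with every letter of $W$, since each such letter has index $\ge l+1$. Thus $s_{l-1}$ can be pulled through $W^{\pm1}$ freely, and the whole statement collapses to the single local identity $s_{l-1}(s_{l-1}s_l)^3=(s_ls_{l-1})^3s_{l-1}$. This in turn is the conjugation identity $s_{l-1}(s_{l-1}s_l)^3s_{l-1}=(s_ls_{l-1})^3$, which follows immediately by cancelling the two adjacent squares $s_{l-1}^2=1$ at either end.

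For (2) and (3), write $m=m_{n-1,i_{n-1}}$ and factor it as $m=L\,s_l\,R$, where $R=s_{l-1}s_{l-2}\cdots s_{i_{n-1}+1}$ gathers the letters of $m$ lying to the right of $s_l$ and $L=s_{n-1}\cdots s_{l+1}$ those lying to its left. Since $s_l$ commutes with every letter of $R$ except $s_{l-1}$, one gets $Rs_lR^{-1}=s_{l-1}s_ls_{l-1}$, and hence $m s_l m^{-1}=L\,(s_ls_{l-1}s_ls_{l-1}s_l)\,L^{-1}$. When $l=n-1$ the factor $L$ is empty, and right-multiplying by $s_{l-1}$ produces $(s_ls_{l-1})^3$, which is (3). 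When $l<n-1$, split off the rightmost letter of $L$ by writing $L=L's_{l+1}$ with $L'=s_{n-1}\cdots s_{l+2}$; as $s_{l+1}$ is the only letter of $L$ that does not commute with $s_l$ (it does commute with $s_{l-1}$), and $s_{l-1}$ commutes with both $L'$ and $s_{l+1}$, I can slide the trailing $s_{l-1}$ inward to complete the product $(s_ls_{l-1})^3$, arriving at $m s_l m^{-1}s_{l-1}=L's_{l+1}(s_ls_{l-1})^3s_{l+1}L'^{-1}$. Noting that $L's_{l+1}=(s_{l+1}s_{l+2}\cdots s_{n-1})^{-1}$ rewrites the right-hand side as $\big((s_ls_{l-1})^3\big)^{s_{l+1}s_{l+2}\cdots s_{n-1}}$, giving (2). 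The one point that needs care is that conjugation by $s_{l+1}\cdots s_{n-1}$ does \emph{not} reduce to conjugation by $s_{l+1}$ alone: although $s_{l+2},\dots,s_{n-1}$ commute with $s_l$ and $s_{l-1}$, they fail to commute with $s_{l+1}$, so the outer letters $L'^{\pm1}$ genuinely survive.

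Finally, (4) follows from (2) and (3) by applying the projection $\pi\colon T_n\to S_n$ with $\ker\pi=PT_n$. The right-hand sides of (2) and (3) are conjugates of elements of the form $(s_js_{j\pm1})^3$, all of which lie in $PT_n$; hence $m s_l m^{-1}s_{l-1}\in PT_n$ and therefore $\pi(m)\pi(s_l)\pi(m)^{-1}=\pi(s_{l-1})$. Since $s_{l-1}$ is an involution this rearranges to $\pi(m s_l)=\pi(s_{l-1}m)$. As $\Lambda_n$ is a transversal for $PT_n$, the projection $\pi$ restricts to a bijection $\Lambda_n\to S_n$, so $\overline{x}=\overline{y}$ precisely when $\pi(x)=\pi(y)$; applying this with $x=m s_l$ and $y=s_{l-1}m$ yields $\overline{m_{n-1,i_{n-1}}\,s_l}=\overline{s_{l-1}\,m_{n-1,i_{n-1}}}$. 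The computations throughout are routine bookkeeping with the commutation relations; the only genuinely delicate steps are the repackaging of the conjugator in (2) and the clean reduction of (4) to the earlier parts via $\pi$.
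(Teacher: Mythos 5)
Your proof is correct, and for parts (1)--(3) it is essentially the computation the paper performs: the paper expands $m_{n-1,i_{n-1}}\, s_l\, m_{n-1,i_{n-1}}^{-1}\, s_{l-1}$ letter by letter and cancels the commuting generators, whereas you package the same cancellations as $R s_l R^{-1} = s_{l-1} s_l s_{l-1}$ after factoring $m_{n-1,i_{n-1}} = L s_l R$; these are the same argument in different bookkeeping. The one place you genuinely diverge is part (4). The paper proves (4) by manipulating the word under the bar directly, at one point replacing $s_l s_{l-1} s_l$ by $s_{l-1} s_l s_{l-1}$ ``using the braid relations'' --- a step that is not an identity in $T_n$ (twin groups have no braid relations) and is legitimate only because $\overline{x}$ depends on $x$ solely through its image in $S_n$. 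You instead deduce (4) as a corollary of (2) and (3), observing that $m s_l m^{-1} s_{l-1}$ is a conjugate of $(s_l s_{l-1})^3 \in PT_n$ and that $\pi$ restricts to a bijection $\Lambda_n \to S_n$, so $\overline{x} = \overline{y}$ exactly when $\pi(x) = \pi(y)$. This buys you two things: it makes explicit the justification that the paper's ``braid relations'' step silently relies on, and it avoids redoing the word computation by reusing the earlier parts. Both routes are valid; yours is the tidier one for (4).
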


\begin{proof}
For (1), consider
\begin{eqnarray*}
s_{l-1}\big((s_{l-1} s_l)^3 \big)^{s_{l+1}s_{l+2}\ldots s_{n-1}} &=& s_{l-1} \big(s_{n-1} \ldots s_{l+2} s_{l+1} (s_{l-1} s_l)^3 s_{l+1}s_{l+2}\ldots s_{n-1} \big)\\
&=& s_{n-1} \ldots s_{l+2} s_{l+1} s_{l-1} (s_{l-1} s_l)^3 s_{l+1}s_{l+2}\ldots s_{n-1}\\
&=& s_{n-1} \ldots s_{l+2} s_{l+1}  (s_l s_{l-1})^3 s_{l-1} s_{l+1}s_{l+2}\ldots s_{n-1}\\
&=& \big(s_{n-1} \ldots s_{l+2} s_{l+1}  (s_l s_{l-1})^3  s_{l+1}s_{l+2}\ldots s_{n-1} \big) s_{l-1}\\
&=& \big((s_l s_{l-1})^3 \big)^{s_{l+1}s_{l+2}\ldots s_{n-1}} s_{l-1}.
\end{eqnarray*}

Next, for $i_{n-1}+1< l \le n-1$, we have
\begin{small}
\begin{eqnarray*}
m_{n-1,i_{n-1}} s_l ~m_{n-1,i_{n-1}}^{-1} s_{l-1} &=& s_{n-1} \ldots s_{l+1} s_l s_{l-1} \ldots s_{i_{n-1}+1} s_l s_{i_{n-1}+1} \ldots  s_{l-1}  s_l s_{l+1} \ldots s_{n-1} s_{l-1}\\
&=& s_{n-1} \ldots s_{l+1} s_l s_{l-1} s_l \ldots s_{i_{n-1}+1} s_{i_{n-1}+1} \ldots  s_{l-1}  s_l s_{l+1} \ldots s_{n-1} s_{l-1}\\
&=& s_{n-1} \ldots s_{l+1} s_l s_{l-1} s_l  s_{l-1}  s_l s_{l+1} \ldots s_{n-1} s_{l-1}\\
&=& s_{n-1} \ldots s_{l+1} s_l s_{l-1} s_l  s_{l-1}  s_l  s_{l-1} s_{l+1} \ldots s_{n-1}\\
&=& \big( (s_l s_{l-1})^3 \big)^{s_{l+1} \ldots s_{n-1}}.
\end{eqnarray*}
\end{small}
This proves (2), and (3) is its special case.
\par

Let $i_{n-1}+1< l \le n-1$. Then, for (4), we have
\begin{eqnarray*}
\overline{m_{n-1,i_{n-1}}~ s_l} &=& \overline{s_{n-1} \ldots s_{l+1} s_l s_{l-1} s_{l-2}\ldots s_{i_{n-1}+1} s_l}\\
&=& \overline{s_{n-1} \ldots s_{l+1} s_l s_{l-1}  s_l s_{l-2} \ldots s_{i_{n-1}+1}}\\
&=& \overline{s_{n-1} \ldots s_{l+1} s_{l-1}  s_l s_{l-1} s_{l-2} \ldots s_{i_{n-1}+1}},~\textrm{using the braid relations}\\
&=& \overline{s_{l-1}~ s_{n-1} \ldots s_{l+1}   s_l s_{l-1} s_{l-2} \ldots s_{i_{n-1}+1}}\\
&=& \overline{s_{l-1}~ m_{n-1,i_{n-1}}}.
\end{eqnarray*}
\end{proof}

Observe that $PT_{n-1}$ is a subgroup of $PT_n$ for each $n \ge 2$.  In the next result, we determine generators of $PT_n$ that do not lie in $PT_{n-1}$.
\par

\begin{theorem}\label{pure-twin-gen}
The pure twin group $PT_n$, for $n>2$, is generated by $PT_{n-1}$ together with the following additional generators
 $$\big\{\big((s_{l-1} s_l)^3 \big)^{(\lambda_0 ~m_{n-1, l})^{-1}}~|~  2 \le l \le n-1~\textrm{and}~\lambda_0 \in \Lambda_{n-1} \big\}.$$
\end{theorem}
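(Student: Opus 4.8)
The plan is to run the Reidemeister--Schreier machinery recorded above and show that every generator $S_{\lambda,s_j}=(\lambda s_j)(\overline{\lambda s_j})^{-1}$ of $PT_n$ lies in the subgroup $H$ generated by $PT_{n-1}$ together with the proposed elements. One inclusion is automatic: since $PT_{n-1}\le PT_n$ and each proposed generator is a $T_n$-conjugate of $(s_{l-1}s_l)^3\in PT_n$, normality of $PT_n$ in $T_n$ gives $H\le PT_n$, so the content is the reverse inclusion. The first step I would take is to factor each $\lambda\in\Lambda_n$ uniquely as $\lambda=\lambda_0\,m_{n-1,i_{n-1}}$ with $\lambda_0\in\Lambda_{n-1}$ and $0\le i_{n-1}\le n-1$; this follows from the definition of $\Lambda_n$ and reduces the whole problem to a case analysis in the pair $(i_{n-1},j)$. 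I would use repeatedly the elementary fact that left multiplication by an element of $PT_n$ does not change the Schreier representative, i.e. $\overline{xy}=\overline{y}$ whenever $x\in PT_n$.

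Next I would dispose of the low cases. If $j<i_{n-1}$, then $s_j$ commutes with every letter of $m_{n-1,i_{n-1}}=s_{n-1}\cdots s_{i_{n-1}+1}$, so $\lambda s_j=\lambda_0 s_j\,m_{n-1,i_{n-1}}$; writing $\lambda_0 s_j=h\,\lambda_0'$ with $h=S_{\lambda_0,s_j}\in PT_{n-1}$ and $\lambda_0'\in\Lambda_{n-1}$ shows $S_{\lambda,s_j}=h\in PT_{n-1}$. If $j=i_{n-1}$ or $j=i_{n-1}+1$, then $m_{n-1,i_{n-1}}s_j$ equals $m_{n-1,i_{n-1}-1}$ or $m_{n-1,i_{n-1}+1}$, so $\lambda s_j\in\Lambda_n$ and $S_{\lambda,s_j}=1$. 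These cases, including the boundary case $i_{n-1}=n-1$ where $m_{n-1,i_{n-1}}=1$, account for all generators with $j\le i_{n-1}+1$.

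The heart of the proof is the remaining case $l:=j\ge i_{n-1}+2$, which is exactly the range $i_{n-1}+1<l\le n-1$ in which \lemref{key-lemma} applies. Part (2), rearranged, gives $m_{n-1,i_{n-1}}s_l=\big((s_l s_{l-1})^3\big)^{m_{n-1,l}^{-1}}\,s_{l-1}\,m_{n-1,i_{n-1}}$, so conjugating the prefix by $\lambda_0$ produces exactly the inverse of the proposed generator $\big((s_{l-1}s_l)^3\big)^{(\lambda_0 m_{n-1,l})^{-1}}$, using $(s_l s_{l-1})^3=\big((s_{l-1}s_l)^3\big)^{-1}$. Since this prefix lies in $PT_n$, part (4) yields $\overline{\lambda_0 m_{n-1,i_{n-1}}s_l}=\overline{\lambda_0 s_{l-1}m_{n-1,i_{n-1}}}=\lambda_0''\,m_{n-1,i_{n-1}}$, where $\lambda_0''=\overline{\lambda_0 s_{l-1}}\in\Lambda_{n-1}$. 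Substituting and cancelling $m_{n-1,i_{n-1}}$ leaves
$$S_{\lambda,s_l}=\big((s_l s_{l-1})^3\big)^{(\lambda_0 m_{n-1,l})^{-1}}\cdot\big(\lambda_0 s_{l-1}(\lambda_0'')^{-1}\big),$$
a product of a proposed generator (up to inverse) and the generator $S_{\lambda_0,s_{l-1}}\in PT_{n-1}$. Every Reidemeister--Schreier generator therefore lies in $H$, which finishes the argument.

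The step I expect to require the most care is this last case: one must track precisely which Schreier representative $\overline{\lambda_0 m_{n-1,i_{n-1}}s_l}$ is reached and match the conjugating word to the exact form $(\lambda_0 m_{n-1,l})^{-1}$ appearing in the statement, which is what \lemref{key-lemma}(2),(4) are engineered to deliver. I would also double-check the index ranges, namely $l\ge 2$ so that $s_{l-1}$ exists and lies in $T_{n-1}$, and $i_{n-1}+1<l\le n-1$ so the lemma's hypotheses hold, to be sure the four cases genuinely partition all pairs $(i_{n-1},j)$ with $1\le j\le n-1$.
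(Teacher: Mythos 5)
Your proof is correct and follows essentially the same route as the paper: Reidemeister--Schreier generators, the factorization $\lambda=\lambda_0\, m_{n-1,i_{n-1}}$, and \lemref{key-lemma}(2),(4) in the case $l>i_{n-1}+1$ to obtain $S_{\lambda,s_l}=\big((s_ls_{l-1})^3\big)^{(\lambda_0 m_{n-1,l})^{-1}}S_{\lambda_0,s_{l-1}}$ with $S_{\lambda_0,s_{l-1}}\in PT_{n-1}$. The only difference is organizational: you sort the cases directly by the position of $j$ relative to $i_{n-1}$, while the paper first splits on whether $\lambda_0$ and $m_{n-1,i_{n-1}}$ are trivial, but the computations coincide.
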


\begin{proof}
By Reidemeister--Schreier Theorem, the pure twin group $PT_n$ is generated by the set
$$\big\{S_{\lambda, a}= (\lambda a)(\overline{\lambda a})^{-1}~|~\lambda \in \Lambda_n~\textrm{and}~a \in \{s_1, s_1, \ldots, s_{n-1} \} \big\}.$$ Let $\lambda \in \Lambda_n$, and $a=s_l$ for some $1\le l \le n-1$. Then we can write $\lambda = \lambda_0 m_{n-1, i_{n-1}}$, where $$ \lambda_0= m_{1, i_{1}}m_{2, i_{2}}\cdots m_{n-2, i_{n-2}}.$$
\par
We now have the following four cases:
\par

Case (1): If $\lambda_0=1$ and $m_{n-1, i_{n-1}}=1$, then  $S_{\lambda, a}=1$.
\par

Case (2): If $\lambda_0\neq 1$ and $m_{n-1, i_{n-1}}=1$, then  there are two subcases:
\par

Subcase (2a): If $l =n-1$, then
\begin{eqnarray*}
S_{\lambda_0, s_l} &=& (\lambda_0 s_{n-1})(\overline{\lambda_0 s_{n-1}})^{-1}\\
&=&   (\lambda_0 s_{n-1})(\lambda_0 s_{n-1})^{-1}\\
&=& 1.
\end{eqnarray*}
\par
Subcase (2b): If $l < n-1$, then $S_{\lambda_0, s_l} \in PT_{n-1}$.
\par

Case (3): If $\lambda_0=1$ and $m_{n-1, i_{n-1}} \neq 1$, then we have the following four cases:
\par

Subcase  (3a): If $l>  i_{n-1}+1$, then a direct computation yields
\begin{eqnarray*}
S_{m_{n-1, i_{n-1}}, s_l} &=& (m_{n-1, i_{n-1}} s_l)(\overline{m_{n-1, i_{n-1}} s_l})^{-1}\\
&=&  (m_{n-1, i_{n-1}} s_l) (s_{l-1} m_{n-1, i_{n-1}})^{-1}\\
&=& \big((s_l s_{l-1})^3\big)^{s_{l+1}s_{l+2}\cdots s_{n-1}},~\textrm{using~ Lemma~\ref{key-lemma}(2)}\\
&=& \big((s_l s_{l-1})^3\big)^{m_{n-1, l}^{-1}}.
\end{eqnarray*}
\par

Subcase (3b): If $l =  i_{n-1}+1$, then
\begin{eqnarray*}
S_{m_{n-1, i_{n-1}}, s_l} &=& (m_{n-1, i_{n-1}} s_{i_{n-1}+1})(\overline{m_{n-1, i_{n-1}} s_{i_{n-1}+1}})^{-1}\\
&=&  m_{n-1, i_{n-1}+1} ~\overline{m_{n-1, i_{n-1}+1}}^{-1}\\
&=& 1.
\end{eqnarray*}
\par

Subcase (3c): If $l =  i_{n-1}$, then
\begin{eqnarray*}
S_{m_{n-1, i_{n-1}}, s_l} &=& (m_{n-1, i_{n-1}} s_{i_{n-1}})(\overline{m_{n-1, i_{n-1}} s_{i_{n-1}}})^{-1}\\
&=&  m_{n-1, i_{n-1}-1} ~\overline{m_{n-1, i_{n-1}-1}}^{-1}\\
&=& 1.
\end{eqnarray*}
\par

Subcase  (3d): If $l <  i_{n-1}$, then
\begin{eqnarray*}
S_{m_{n-1, i_{n-1}}, s_l} &=& (m_{n-1, i_{n-1}} s_l)(\overline{m_{n-1, i_{n-1}} s_l})^{-1}\\
&=&  (s_l m_{n-1, i_{n-1}})(\overline{ s_l m_{n-1, i_{n-1}}})^{-1}\\
&=& (s_l m_{n-1, i_{n-1}})(s_l m_{n-1, i_{n-1}})^{-1}\\\
&=& 1.
\end{eqnarray*}
\par

Case (4): If $\lambda_0 \neq1$ and $m_{n-1, i_{n-1}} \neq 1$, then we have the following four cases:
\par

Subcase (4a): If $l>  i_{n-1}+1$, then
\begin{eqnarray*}
S_{\lambda_0 m_{n-1, i_{n-1}}, s_l} &=& (\lambda_0 m_{n-1, i_{n-1}} s_l)(\overline{\lambda_0 m_{n-1, i_{n-1}} s_l})^{-1}\\
&=&  (\lambda_0 m_{n-1, i_{n-1}} s_l) (\overline{\lambda_0 s_{l-1} m_{n-1, i_{n-1}}})^{-1}\\
&=&  (\lambda_0 m_{n-1, i_{n-1}} s_l) (m_{n-1, i_{n-1}}^{-1}) (\overline{\lambda_0 s_{l-1}})^{-1}\\
&=& (\lambda_0 s_{l-1}) \big((s_{l-1} s_l)^3 \big)^{s_{l+1}s_{l+2}\cdots s_{n-1}}(\overline{\lambda_0 s_{l-1}})^{-1},~\textrm{using~ Lemma~\ref{key-lemma}(2)}\\
&=& \big((s_{l-1} s_l)^3 \big)^{s_{l+1}s_{l+2}\cdots s_{n-1} (\lambda_0 s_{l-1})^{-1}} (\lambda_0 s_{l-1}) (\overline{\lambda_0 s_{l-1}})^{-1}\\
&=& \big((s_l s_{l-1})^3 \big)^{s_{l+1}s_{l+2}\cdots s_{n-1} \lambda_0^{-1}} ~S_{\lambda_0, s_{l-1}}\\
&=& \big((s_l s_{l-1})^3 \big)^{(\lambda_0 ~m_{n-1, l})^{-1}} ~S_{\lambda_0, s_{l-1}},
\end{eqnarray*}
where $S_{\lambda_0, s_{l-1}} \in PT_{n-1}$.
\par

Subcase  (4b): If $l =  i_{n-1}+1$, then
\begin{eqnarray*}
S_{\lambda_0 m_{n-1, i_{n-1}}, s_l} &=& (\lambda_0 m_{n-1, i_{n-1}} s_{i_{n-1}+1})(\overline{\lambda_0 m_{n-1, i_{n-1}} s_{i_{n-1}+1}})^{-1}\\
&=& (\lambda_0 m_{n-1, i_{n-1}+1}) (\overline{ \lambda_0 m_{n-1, i_{n-1}+1}})^{-1}\\
&=& 1.
\end{eqnarray*}
\par

Subcase  (4c): If $l =  i_{n-1}$, then
\begin{eqnarray*}
S_{\lambda_0 m_{n-1, i_{n-1}}, s_l} &=& (\lambda_0 m_{n-1, i_{n-1}} s_{i_{n-1}})(\overline{\lambda_0 m_{n-1, i_{n-1}} s_{i_{n-1}}})^{-1}\\
&=&  (\lambda_0 m_{n-1, i_{n-1}-1}) (\overline{\lambda_0 m_{n-1, i_{n-1}-1}})^{-1}\\
&=& 1.
\end{eqnarray*}
\par

Subcase  (4d): If $l <  i_{n-1}$, then
\begin{eqnarray*}
S_{\lambda_0 m_{n-1, i_{n-1}}, s_l} &=& (\lambda_0 m_{n-1, i_{n-1}} s_l)(\overline{\lambda_0 m_{n-1, i_{n-1}} s_l})^{-1}\\
&=&  (\lambda_0 s_l m_{n-1, i_{n-1}})(\overline{\lambda_0  s_l m_{n-1, i_{n-1}}})^{-1}\\
&=&  (\lambda_0 s_l m_{n-1, i_{n-1}})(m_{n-1, i_{n-1}})^{-1}(\overline{\lambda_0  s_l})^{-1}\\
&=& (\lambda_0 s_l)(\overline{\lambda_0  s_l})^{-1}\\
&=& S_{\lambda_0, s_l} \in PT_{n-1}.
\end{eqnarray*}
Note that Subcase (4a) yields the case Subcase (3a) by taking $\lambda_0=1$. This completes the proof of the theorem.
\end{proof}
\par

\begin{remark}
By Theorem \ref{pure-twin-gen}, for $n \ge 6$, $(s_1s_2)^3$ and $(s_4s_5)^3$ are two distinct commuting generators of $PT_n$. This shows that $PT_n$ is not free for $n \ge 6$, giving an alternate proof of the first part of Theorem \ref{pure-twin-not-free}.
\end{remark}
\par

The following result is useful for determining the minimal number of generators of $PT_n$.
\par

\begin{lemma}\label{inverse-coset-representative}
If $\lambda \in \Lambda_n$, then $\lambda^{-1}\in \Lambda_n$.
\end{lemma}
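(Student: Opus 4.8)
The plan is to argue by induction on $n$, using the recursive description of the Schreier set. Set $M_{n-1}=\{\,m_{n-1,i}=s_{n-1}s_{n-2}\cdots s_{i+1}\ :\ 0\le i\le n-1\,\}$, where $m_{n-1,n-1}=1$; then the definition of $\Lambda_n$ gives the factorisation $\Lambda_n=\Lambda_{n-1}\,M_{n-1}$, with $\Lambda_{n-1}$ regarded as the set of normal-form words in $s_1,\dots,s_{n-2}$. The base cases $n=2,3$ are immediate, since each element of $\Lambda_2=\{1,s_1\}$ and of $\Lambda_3$ is either its own inverse or has its inverse again among the listed representatives.

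For the inductive step, write $\lambda=\lambda_0\,m_{n-1,i}$ with $\lambda_0\in\Lambda_{n-1}$ and $m_{n-1,i}\in M_{n-1}$. As every $s_k$ is an involution, $m_{n-1,i}^{-1}=s_{i+1}s_{i+2}\cdots s_{n-1}$, so
\[
\lambda^{-1}=m_{n-1,i}^{-1}\,\lambda_0^{-1}=(s_{i+1}s_{i+2}\cdots s_{n-1})\,\lambda_0^{-1}.
\]
By the inductive hypothesis $\lambda_0^{-1}\in\Lambda_{n-1}$, and in its normal form $m_{1,j_1}\cdots m_{n-2,j_{n-2}}$ the letter $s_{n-2}$ occurs at most once, as the leading letter of the final factor, while $s_{n-1}$ does not occur at all. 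Hence $s_{n-1}$ appears in $\lambda^{-1}$ only in the leading run, and it remains to rewrite $\lambda^{-1}$ in the shape $\lambda_1\,m_{n-1,i'}$ with $\lambda_1\in\Lambda_{n-1}$ and $m_{n-1,i'}\in M_{n-1}$; this is exactly the assertion $\lambda^{-1}\in\Lambda_{n-1}M_{n-1}=\Lambda_n$.

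To perform this rewriting I would push the high-index generators of the leading run to the right through $\lambda_0^{-1}$ using relation \eqref{r2}: the generator $s_{n-1}$ commutes with every $s_k$ for $k\le n-3$, so it travels rightward freely over $\lambda_0^{-1}$ until it reaches the unique $s_{n-2}$, after which $s_{n-1}$ and the generators it subsequently collects coalesce into one trailing descending block, namely an $m_{n-1,i'}$ factor, leaving the remaining letters to reassemble into a normal form in $\Lambda_{n-1}$. The bookkeeping for these commutations is governed by \lemref{key-lemma}: part~(4) records how a coset representative changes when an $s_{l-1}$ is commuted past an $m_{n-1,\cdot}$, and parts~(1)--(3) identify the descending blocks that arise. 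I expect the crux to be precisely this renormalisation --- verifying that the successive applications of \eqref{r2} always terminate in the canonical staircase form, equivalently that the two transversals $\Lambda_{n-1}M_{n-1}$ and $M_{n-1}^{-1}\Lambda_{n-1}$ select the same representative in every coset. It is worth noting that the bijection $\pi\colon\Lambda_n\to S_n$ shows both of these sets are transversals of $PT_n$ of the same cardinality $n!$, but this counting alone cannot force their representatives to coincide, so the explicit rewriting genuinely has to be carried out.
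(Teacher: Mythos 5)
You set up the same induction as the paper --- factor $\Lambda_n=\Lambda_{n-1}M_{n-1}$, write $\lambda=\lambda_0\,m_{n-1,i}$, and invert to get $\lambda^{-1}=(s_{i+1}\cdots s_{n-1})\,\lambda_0^{-1}$ with $\lambda_0^{-1}\in\Lambda_{n-1}$ by the inductive hypothesis --- but you stop exactly where the content of the lemma begins. The whole point is to show that the word $(s_{i+1}\cdots s_{n-1})\,m_{1,j_1}\cdots m_{n-2,j_{n-2}}$ renormalises to $\lambda_1\,m_{n-1,i'}$ with $\lambda_1\in\Lambda_{n-1}$. You describe a mechanism for this (commute $s_{n-1}$ rightward past all letters of index at most $n-3$ until it meets the unique $s_{n-2}$, where $s_{n-1}m_{n-2,j_{n-2}}=m_{n-1,j_{n-2}}$) but never carry it out, and you say yourself that the explicit rewriting ``genuinely has to be carried out.'' Note also that absorbing $s_{n-1}$ is not the end of the story: the residual ascending run $s_{i+1}\cdots s_{n-2}$ still sits to the left of $m_{1,j_1}\cdots m_{n-3,j_{n-3}}$ and must itself be renormalised, which is the same problem one index lower; the induction therefore has to be organised so that this inner step is actually covered by the hypothesis, and your appeal to Lemma~\ref{key-lemma}(4) (a statement about coset representatives $\overline{\,\cdot\,}$, not about literal words) does not supply it. As written, this is a plan rather than a proof, with the gap sitting at the step you yourself identify as the crux.

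For comparison, the paper closes this gap by a different bookkeeping: it first observes that a single factor already inverts inside $\Lambda_n$, since $m_{k,i_k}^{-1}=s_{i_k+1}\cdots s_k=m_{i_k+1,i_k}\cdots m_{k,k-1}$ is itself a normal form, then reduces to two-factor representatives $\lambda=m_{k,i_k}m_{l,i_l}$ with $k<l$ and performs the commutation explicitly in the two cases $l=k+1$ and $l\ge k+2$, peeling off a trailing block ($s_{k+1}s_k=m_{k+1,k-1}$, respectively $s_l=m_{l,l-1}$) and applying the inductive hypothesis to what remains. Your factorisation $\lambda=\lambda_0\,m_{n-1,i}$ could likely be made to work, but only after the rightward-commutation argument is written down and shown to terminate in the staircase form --- which is precisely the step you have omitted.
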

\begin{proof}
If $\lambda= m_{k,i_k}$ for some $1 \le k \le n-1$, then $$m_{k,i_k}^{-1}=s_{i_k+1}\ldots s_k=m_{i_k+1, i_k}\ldots m_{k, k-1} \in \Lambda_n,$$  and the result holds. We note that it is enough to prove the assertion for Schreier cosets of the form $\lambda= m_{k,i_k}m_{l,i_l}$ for some $1 \le k < l \le n-1$. We prove this by induction on $n$. The result obviously holds for $n=1,2$, which is the base step of the induction. Now consider
\begin{eqnarray*}
\lambda^{-1} &=& m_{l,i_l}^{-1}m_{k,i_k}^{-1}\\
&=& (s_l s_{l-1}\ldots s_{i_l+1})^{-1} (s_k s_{k-1}\ldots s_{i_k+1})^{-1}\\
&=& s_{i_l+1} \ldots  s_{l-1} s_l ~ s_{i_k+1} \ldots s_{k-1} s_k.
\end{eqnarray*}
\par
Now, if $k +1= l$, then
\begin{eqnarray*}
\lambda^{-1} &=& s_{i_l+1} \ldots  s_{l-1} ~ s_{i_k+1} \ldots s_{k-1} (s_{k+1} s_k)\\
&=& (m_{k-1, i_k}m_{l-1, i_l})^{-1} (s_{k+1} s_k) \in \Lambda_n,
\end{eqnarray*}
since $(m_{k-1, i_k}m_{l-1, i_l})^{-1}  \in \Lambda_n$ by induction hypothesis.
\par
If $k +2 \le l$, then
\begin{eqnarray*}
\lambda^{-1} &=& (s_{i_l+1} \ldots  s_{l-1} ~ s_{i_k+1} \ldots s_{k-1} s_k) s_l\\
&=& (m_{k, i_k}m_{l-1, i_l})^{-1} s_l \in \Lambda_n,
\end{eqnarray*}
since $(m_{k, i_k}m_{l-1, i_l})^{-1}  \in \Lambda_n$ by induction hypothesis. This completes the proof.
\end{proof}
\par

\begin{remark}\label{pure-twin-generator-remark}
In view of Lemma \ref{inverse-coset-representative}, if $\lambda_0 \in \Lambda_{n-1}$, then $\lambda_0^{-1} \in \Lambda_{n-1}$. Suppose that $\lambda_0^{-1} = m_{1, i_1}m_{2, i_2}\cdots m_{n-3, i_{n-3}}m_{n-2, i_{n-2}}$. Then, for $2 \le l \le n-1$, we have
\begin{eqnarray*}
(\lambda_0 ~m_{n-1, l})^{-1} &=& m_{n-1, l}^{-1} ~\lambda_0^{-1}\\
&=& s_{l+1}s_{l+2}\cdots s_{n-2}s_{n-1} ~m_{1, i_1}m_{2, i_2}\cdots m_{n-3, i_{n-3}}m_{n-2, i_{n-2}}\\
&=& s_{l+1}s_{l+2}\cdots s_{n-2} ~m_{1, i_1}m_{2, i_2}\cdots m_{n-3, i_{n-3}} (s_{n-1} m_{n-2, i_{n-2}})\\
& & \vdots\\
&=& m_{1, i_1}m_{2, i_2} \cdots m_{l-2, i_{l-2}} m_{l-1, i_{l-1}} (s_{l+1} m_{l, i_l}) \cdots  (s_{n-2} m_{n-3, i_{n-3}}) (s_{n-1} m_{n-2, i_{n-2}}).
\end{eqnarray*}
Hence, $(\lambda_0 ~m_{n-1, l})^{-1}$ is a Schreier coset representative not containing the term $m_{l, i_l}$. Thus, by Theorem \ref{pure-twin-gen}, for $n>2$, generators of $PT_n$ that do not lie in $PT_{n-1}$ are given by
\begin{equation}\label{refined-pure-twin-gen}
\big\{\big((s_{l-1} s_l)^3 \big)^{\lambda}~|~ 2 \le l \le n-1~\textrm{and}~\lambda \in \Lambda_n~\textrm{does not contain the term}~m_{l, i_l}\big\}.
\end{equation}
\end{remark}
\vspace*{1mm}

A direct computation shows that \eqref{equation-generalize} holds in general.
\par

\begin{lemma}\label{equation-generalize-higher}
The following relation holds in $PT_n$ for $n \ge 4$:
$$ (s_{n-2}s_{n-3})^3 \big((s_{n-2}s_{n-1})^3 \big)^{m_{n-3, n-4}\,m_{n-2, n-4}}(s_{n-3}s_{n-2})^3= \big((s_{n-1}s_{n-2})^3\big)^{m_{n-3, n-4} \,m_{n-2, n-3}}.$$
\end{lemma}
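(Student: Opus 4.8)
The plan is to reduce the asserted identity to a short word computation inside the three-generator subgroup $\langle s_{n-3}, s_{n-2}, s_{n-1}\rangle$ of $T_n$, since every factor appearing on either side involves only these generators. First I would unwind the conjugating elements using the definition $m_{k,i_k} = s_k s_{k-1}\cdots s_{i_k+1}$: one has $m_{n-3,n-4} = s_{n-3}$, $m_{n-2,n-4} = s_{n-2}s_{n-3}$, and $m_{n-2,n-3} = s_{n-2}$, so the left conjugator $m_{n-3,n-4}\,m_{n-2,n-4}$ becomes $s_{n-3}s_{n-2}s_{n-3}$ and the right conjugator $m_{n-3,n-4}\,m_{n-2,n-3}$ becomes $s_{n-3}s_{n-2}$. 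Writing $a = s_{n-3}$, $b = s_{n-2}$, $c = s_{n-1}$ and using $(aba)^{-1}=aba$, $(ab)^{-1}=ba$ (the generators being involutions), the claim takes the compact form
\[
(ba)^3\,\big(aba\,(bc)^3\,aba\big)\,(ab)^3 \;=\; ba\,(cb)^3\,ab .
\]

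Next I would expand both sides as words and freely reduce using only $a^2=b^2=c^2=1$. On the left, the factor $(ba)^3\,aba$ cancels down to $bab$ and, symmetrically, $aba\,(ab)^3$ cancels down to $bab$, so the left-hand side becomes $bab\,(bc)^3\,bab$; carrying out the remaining cancellations turns this into the reduced word $b\,a\,c\,b\,c\,b\,c\,b\,a\,b$. The right-hand side $ba\,(cb)^3\,ab$ already reduces to the very same word, so the two sides coincide. It is worth recording that the commuting relation $s_{n-3}s_{n-1}=s_{n-1}s_{n-3}$ is never used: the identity in fact holds in the free product $\mathbb{Z}_2 * \mathbb{Z}_2 * \mathbb{Z}_2$ on $a,b,c$, which gives a clean and relation-light certificate of the equality.

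As an alternative that sidesteps the bookkeeping entirely, one may deduce the lemma directly from \eqref{equation-generalize}. The subgroup $\langle s_{n-3},s_{n-2},s_{n-1}\rangle$ is a standard parabolic subgroup of the Coxeter group $T_n$ and hence isomorphic to $T_4=\langle s_1,s_2,s_3\rangle$ via the index shift $s_{n-3}\mapsto s_1$, $s_{n-2}\mapsto s_2$, $s_{n-1}\mapsto s_3$; under this map the conjugators $s_{n-3}s_{n-2}s_{n-3}$ and $s_{n-3}s_{n-2}$ go to $s_1s_2s_1$ and $s_1s_2$, and the claimed relation is exactly the image of \eqref{equation-generalize}. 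The only real obstacle I anticipate is the routine but error-prone free reduction of these ten-letter words; since the conceptual content is negligible, the emphasis should be on organising the cancellations cleanly and on noting that the whole statement lives in the three-generator parabolic.
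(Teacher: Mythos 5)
Your computation is correct and is exactly the ``direct computation'' the paper asserts without writing out: after unwinding $m_{n-3,n-4}=s_{n-3}$, $m_{n-2,n-4}=s_{n-2}s_{n-3}$, $m_{n-2,n-3}=s_{n-2}$, both sides reduce to the same ten-letter word $bacbcbcbab$, and indeed \eqref{equation-generalize} is recovered at $n=4$. Your two added observations --- that the identity already holds in $\mathbb{Z}_2*\mathbb{Z}_2*\mathbb{Z}_2$ without the commuting relation, and that it is the image of \eqref{equation-generalize} under the parabolic embedding $T_4\hookrightarrow T_n$ --- are both sound and make the claim more transparent than the paper's bare assertion.
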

\vspace*{1mm}

\begin{theorem}\label{rank-pure-twin}
For $n \ge 5$, rank of $PT_n$ is at most $r_n$, where $$r_n=r_{n-1} + \frac{(n-1)!}{2}+ (n-1)! \Big(\sum_{l=3}^{n-2} \frac{(l-1)^2}{l !}\Big)+ (n-3)(n-1)$$
and $r_4=7$.
\end{theorem}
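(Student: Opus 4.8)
The plan is to induct on $n$, the base case $r_4 = 7$ being \thmref{pure-twin-n23}, since $PT_4 \cong F_7$ has rank $7$. For the inductive step I would use the inclusion $PT_{n-1}\le PT_n$ together with \thmref{pure-twin-gen}: a generating set of $PT_n$ is obtained by adjoining to a size-$r_{n-1}$ generating set of $PT_{n-1}$ the new generators $\big((s_{l-1}s_l)^3\big)^{\lambda}$ with $2\le l\le n-1$, where, by \rmkref{pure-twin-generator-remark}, $\lambda$ ranges over the representatives in $\Lambda_n$ that omit the factor $m_{l,i_l}$. Hence $\mathrm{rank}(PT_n)\le r_{n-1}+N_n$, and the task is to discard enough of these new generators --- recovering them from the retained ones and from $PT_{n-1}$ --- so that the number $N_n$ retained matches the claimed value.

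I would organise the count by $l$. Writing each admissible $\lambda$ as $(\lambda_0\,m_{n-1,l})^{-1}$ with $\lambda_0\in\Lambda_{n-1}$ and factoring it, as in \rmkref{pure-twin-generator-remark}, into a \emph{head} (factors indexed below $l$) and a \emph{tail} (factors indexed above $l$), the tail ranges over $(l+1)(l+2)\cdots(n-1)=\frac{(n-1)!}{l!}$ values and the head over $l!$ values, exhausting all $(n-1)!$ candidates. For each fixed tail, conjugation by the head yields many coincidences --- for instance conjugation by $s_{l-1}$ replaces $(s_{l-1}s_l)^3$ by its inverse --- and the heart of the matter is to show that the $l!$ head values collapse to exactly $(l-1)^2$ classes for $2\le l\le n-2$. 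Retaining one generator for each pair consisting of a head class and a tail value then leaves $(l-1)^2\frac{(n-1)!}{l!}$ generators for this $l$, contributing $(n-1)!\sum_{l=2}^{n-2}\frac{(l-1)^2}{l!}$ in total.

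For the top value $l=n-1$ the tail is empty and the same pattern would give $(n-2)^2$ head classes; here \lemref{equation-generalize-higher} provides a relation expressing one of these generators through the remaining ones together with generators of type $l=n-2$, reducing the count by one to $(n-2)^2-1=(n-3)(n-1)$. Assembling the contributions gives $N_n=(n-1)!\sum_{l=2}^{n-2}\frac{(l-1)^2}{l!}+(n-3)(n-1)$, and peeling off the $l=2$ summand $(n-1)!\cdot\frac{(2-1)^2}{2!}=\frac{(n-1)!}{2}$ reproduces the stated recursion for $r_n$.

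The main obstacle is the head-class enumeration: proving that conjugating $(s_{l-1}s_l)^3$ by the $l!$ head representatives produces only $(l-1)^2$ distinct generators, and that at the top level \lemref{equation-generalize-higher} removes exactly one further generator. This rests on determining precisely which head representatives conjugate $(s_{l-1}s_l)^3$ to the same element --- using repeatedly that $s_l$ commutes with $s_1,\dots,s_{l-2}$ --- and on verifying that these identifications, together with the single top relation, suffice to recover every discarded generator. Once the coincidences are pinned down, the rest is routine manipulation of factorial sums.
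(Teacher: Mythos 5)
Your proposal follows the paper's proof essentially verbatim: the paper likewise counts the new generators from \thmref{pure-twin-gen}, organised by $l$, after reducing each $\lambda$ via \rmkref{pure-twin-generator-remark} to an effective head $m_{l-2,i_{l-2}}m_{l-1,i_{l-1}}$ times a tail of nontrivial factors, arriving at $1+(l-2)l=(l-1)^2$ head values for $3\le l\le n-2$, the separate count $(n-1)!/2$ for $l=2$, and $(n-3)(n-1)+1$ values at $l=n-1$ reduced by one via \lemref{equation-generalize-higher}. The head-class collapse you single out as the remaining obstacle is treated in the paper with essentially the same brevity (the heads with $m_{l-2,i_{l-2}}=1$ and $m_{l-1,i_{l-1}}\neq 1$ are simply discarded without further justification), so your outline matches the published argument in both structure and level of detail.
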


\begin{proof}
We proceed as per the following cases:
\par
Case(1): For $l=2$, generators of $PT_n$ that do not lie in $PT_{n-1}$ are given by
$$
\big\{\big((s_1 s_2)^3 \big)^{\lambda}~|~\lambda =m_{3, i_3}m_{4, i_4} \cdots m_{n-1, i_{n-1}}\big\}.
$$
Notice that $m_{k, i_k}\neq1$  for each $3 \le k \le n-1$. For, if some $m_{k, i_k}=1$, then $\big((s_1 s_2)^3 \big)^{\lambda} \in PT_{n-1}$. Thus, there are precisely $(n-1)!/2$ such generators.
\par

Case(2): For $2 < l <n-1$, in view of Remark \eqref{pure-twin-generator-remark}, generators of $PT_n$ that do not lie in $PT_{n-1}$ are given by
$$
\big\{\big((s_{l-1} s_l)^3 \big)^{\lambda}~|~\lambda =m_{l-2, i_{l-2}}m_{l-1, i_{l-1}} m_{l+1, i_{l+1}}m_{l+2, i_{l+2}}\cdots m_{n-1, i_{n-1}}\big\}.
$$
Just as in Case (1), $m_{k, i_k}\neq1$  for each $l+1 \le k \le n-1$. If $m_{l-2, i_{l-2}}=1=m_{l-1, i_{l-1}}$, then the number of generators is $(l+1)(l+2)\cdots (n-1)=(n-1)!/l!$. In the other case, $m_{l-2, i_{l-2}} \neq 1$ but $m_{l-1, i_{l-1}}$ could be trivial, and hence the number of generators is $(l-2) l (l+1)(l+2)\cdots (n-1)=(n-1)!/\big((l-3)! (l-1)\big)$.
\par

Case(3): For $l =n-1$, we have $m_{n-1, l}=1$, and hence generators are of the form
$$
\big\{\big((s_{n-2} s_{n-1})^3 \big)^{\lambda}~|~\lambda =m_{n-3, i_{n-3}} m_{n-2, i_{n-2}}\big\}.
$$
The number of such generators is $(n-3)(n-1)+1$. But, in view of Lemma \ref{equation-generalize-higher}, two of these generators are conjugated by a generator lying in $PT_{n-1}$, and hence the actual number is $(n-3)(n-1)$.
Set $r_4=7$, and for $n \ge 5$, define
\begin{eqnarray*}
r_n  &= & r_{n-1} + \frac{(n-1)!}{2}+\sum_{l=3}^{n-2} \Big(\frac{(n-1)!}{l!} + \frac{(n-1)!}{(l-3)! (l-1)} \Big)  + (n-3)(n-1)\\
&= & r_{n-1} + \frac{(n-1)!}{2}+ (n-1)! \Big(\sum_{l=3}^{n-2} \frac{(l-1)^2}{l !}\Big)+ (n-3)(n-1).
\end{eqnarray*}
Then our computations show that rank of $PT_n$ is at most $r_n$.
\end{proof}
\vspace*{1mm}

\begin{remark}
By \cite{Barcelo} and \cite{BW}, the first Betti number of the Eilenberg--Maclane space $X_n$ is $2^{n-3}(n^2-5n+8)-1$, which is a lower bound for the number of generators of $PT_n$ for $n >2$.
\end{remark}


\subsection{Conjugation action of $T_n$ on $PT_n$}
Since $PT_n$ is a normal subgroup of $T_n$, there is a natural homomorphism $$\phi_n: \Inn(T_n) \to \Aut(PT_n)$$ obtained by restricting inner automorphisms of $T_n$ to $PT_n$. In view of Corollary \ref{center-twin}, $T_n \cong \Inn(T_n)$ for $n >2$, and we have $$\phi_n: T_n \to \Aut(PT_n).$$ By Theorem \ref{pure-twin-n23},  $PT_3 \cong \mathbb{Z}$ and we have
$$
\left( (s_1 s_2)^3 \right)^{s_1} = (s_2 s_1)^3~\textrm{and}~ \left( (s_1 s_2)^3 \right)^{s_2} = (s_2 s_1)^3.
$$
Thus, the homomorphism $\phi_3: T_3 \to \Aut(\mathbb{Z})$ is not faithful.

Again, by Theorem \ref{pure-twin-n23}, $PT_4\cong F_7$ is generated by the elements
$$ b_1 = (s_1 s_2)^3,\quad b_2 = \left( (s_1 s_2)^3 \right)^{s_3}, \quad b_3 = \big( (s_1 s_2)^3\big)^{s_3s_2},\quad b_4 = \left( (s_1 s_2)^3 \right)^{s_3 s_2 s_1},$$
$$ \quad b_5 = (s_2 s_3)^3,  \quad  b_6 = \left((s_2 s_3)^3 \right)^{s_1}, \quad b_7 = \big((s_2 s_3)^3\big)^{s_1s_2}.$$

A direct computation shows that the automorphisms $\phi_4(s_i)$, $i = 1, 2, 3$, act on these generators by the rules:

$$
\phi_4(s_1) : \left\{
\begin{array}{l}
b_1 \mapsto b_1^{-1}, \\
b_2 \mapsto b_2^{-1}, \\
b_3 \mapsto b_4, \\
b_4 \mapsto b_3, \\
b_5 \mapsto b_6, \\
b_6 \mapsto b_5, \\
b_7 \mapsto b_1 b_7^{-1} b_1^{-1},
\end{array}
\right.~~~
\phi_4(s_2) : \left\{
\begin{array}{l}
b_1 \mapsto b_1^{-1}, \\
b_2 \mapsto b_3, \\
b_3 \mapsto b_2, \\
b_4 \mapsto b_1^{-1} b_4^{-1} b_1, \\
b_5 \mapsto b_5^{-1}, \\
b_6 \mapsto b_7, \\
b_7 \mapsto b_6,
\end{array}
\right.
$$
and
$$
\phi_4(s_3) : \left\{
\begin{array}{l}
b_1 \mapsto b_2, \\
b_2 \mapsto b_1, \\
b_3 \mapsto b_5^{-1} b_3^{-1} b_5, \\
b_4 \mapsto b_6^{-1} b_4^{-1} b_6, \\
b_5 \mapsto b_5^{-1}, \\
b_6 \mapsto b_6^{-1}, \\
b_7 \mapsto b_2^{-1}b_6^{-1} b_{4}^{-1} b_{1} b_7 b_3 b_5.
\end{array}
\right.
$$
\par

Let $V$ be the verbal subgroup of $PT_4$ defined by the word $w = x^2$. Then $$PT_4 / V \cong \mathbb{Z}_2^{\oplus 7},$$ the direct product of 7 copies of cyclic group of order 2. For each $i=1,2, \ldots, 7$, let $\beta_i$ denote the image of $b_i$ in $\mathbb{Z}_2^{\oplus 7}$. The automorphisms $\phi_4(s_i)$ induce automorphisms of $\mathbb{Z}_2^{\oplus 7}$. If we restrict these automorphisms onto the subgroup $G_4 = \langle \beta_1, \beta_2, \beta_3, \beta_4 \rangle$, then they act as permutations on the generators. In particular, $\phi_4(s_1)$ acts as permutation $(3 4)$, i.e. permutes generators $\beta_3$ and $\beta_4$; $\phi_4(s_2)$ acts as permutation $(2 3)$ and  $\phi_4(s_3)$ acts as permutation $(1 2)$. Hence, the group $\big\langle \phi_4 (s_1), \phi_4(s_2), \phi_4(s_3) \big\rangle$ acts on $G_4$ as $S_4$. Now we can prove
\par

\begin{prop}\label{pt4-faithful}
The representation  $\phi_4: T_4 \to \Aut(F_7)$ is faithful.
\end{prop}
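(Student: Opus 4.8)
The plan is to reduce the faithfulness of $\phi_4$ to the triviality of the center of the free group $PT_4\cong F_7$. Write $K=\ker\phi_4$. By construction $\phi_4(g)$ is the restriction to $PT_4$ of conjugation by $g$, so $K$ is exactly the centralizer $C_{T_4}(PT_4)$ of $PT_4$ in $T_4$. I will show $K=1$ in two stages: first that $K\subseteq PT_4$, and then that $K$ is trivial.

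For the first stage I would exploit the permutation action already exhibited on $G_4=\langle \beta_1,\beta_2,\beta_3,\beta_4\rangle \le PT_4/V$. Since $V$ is a verbal, hence characteristic, subgroup, every automorphism $\phi_4(g)$ descends to $PT_4/V\cong\mathbb{Z}_2^{\oplus 7}$; restricting the induced action to $G_4$ yields a homomorphism $\psi\colon T_4\to S_4=\mathrm{Sym}\{\beta_1,\beta_2,\beta_3,\beta_4\}$ that factors through $\phi_4$. In particular $K\subseteq\ker\psi$. From the displayed action one reads off $\psi(s_1)=(3\,4)$, $\psi(s_2)=(2\,3)$, $\psi(s_3)=(1\,2)$. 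Comparing this with the defining surjection $\pi\colon T_4\to S_4$, for which $\pi(s_i)=(i,i+1)$, one checks on the three generators that $\psi=c\circ\pi$, where $c$ is conjugation in $S_4$ by the reversal permutation $(1\,4)(2\,3)$. Because $c$ is an automorphism of $S_4$, it follows that $\ker\psi=\ker\pi=PT_4$, and hence $K\subseteq PT_4$.

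For the second stage, observe that the restriction $\phi_4|_{PT_4}\colon PT_4\to\Aut(PT_4)$ is precisely the canonical map sending $x$ to conjugation by $x$, that is, the map $PT_4\to\Inn(PT_4)$, whose kernel is $\Za(PT_4)$. By Theorem~\ref{pure-twin-n23}, $PT_4\cong F_7$ is a nonabelian free group, so $\Za(PT_4)=1$ and $\phi_4|_{PT_4}$ is injective. Combining the two stages gives $K=K\cap PT_4=\ker\big(\phi_4|_{PT_4}\big)=1$, so $\phi_4$ is faithful.

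The laborious ingredient is the explicit description of the automorphisms $\phi_4(s_i)$, but these are already in hand. The one point that genuinely requires care, and which I expect to be the main obstacle, is the identification $\ker\psi=PT_4$: a priori the permutation representation $\psi$ could have a kernel strictly larger than that of $\pi$, and it is exactly the comparison $\psi=c\circ\pi$ by an inner automorphism of $S_4$ that rules this out. Everything else in the argument is formal.
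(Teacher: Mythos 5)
Your proof is correct and follows essentially the same route as the paper: reduce modulo the verbal subgroup $V$ to get the permutation action on $G_4$, conclude that the kernel of $\phi_4$ lies in $PT_4$, and then kill it using the triviality of the center of the nonabelian free group $F_7$. In fact your identification $\psi = c\circ\pi$ with $c$ conjugation by $(1\,4)(2\,3)$ makes explicit the step the paper states only as ``it is trivial only in the case when $\lambda$ is trivial,'' so your writeup is a more careful version of the same argument.
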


\begin{proof}
Consider the extension $1 \to PT_4 \to T_4 \to S_4 \to 1$. Each element of $T_4$ can be written uniquely as $a \lambda$ for some $a \in PT_4$ and $\lambda \in \Lambda_4$. Now, suppose that $\phi_4(a \lambda)=\id$, the identity automorphism. Then it induces an automorphism of $G_4$. It is trivial only in the case when $\lambda$ is trivial. Hence, $a \lambda = a$ and $\phi_4(a)$ is the conjugation by $a$ which implies that $a=1$. Hence, the representation $\phi_4$ is faithful.
\end{proof}
\vspace*{1mm}

\begin{problem}
The following problems remain unsettled at this point:
\begin{enumerate}
\item Is it true that   $\text{\rm Ker}(\phi_n)$  is trivial for $n \ge 5$?
\item A group $G$ is residually nilpotent if $ \bigcap_{i=1}^{\infty} \gamma_i (G) = 1$, where $\gamma_i (G)$ is the $i$-th term of the lower central series of $G$. Obviously,  $T_2 = \mathbb{Z}_2$ is residually nilpotent. Since $T_3 = \mathbb{Z}_2 *\mathbb{Z}_2$ is the infinite dihedral group, we have $\gamma_{2} (T_3) = \big\langle (s_1 s_2)^2 \big\rangle$, and hence
$$
\gamma_{i} (T_3) = \big\langle (s_1 s_2)^{2^{i-1}} \big\rangle~\textrm{for}~i \geq 2.
$$
Thus,  $\bigcap_{i=1}^{\infty} \gamma_i (T_3) = 1$ and $T_3$ is residually nilpotent. Determine whether $T_n$ is residually nilpotent for $n \ge 4$.
\end{enumerate}
\end{problem}
\vspace*{1mm}

\section{Virtual and welded twin groups} \label{sec5}

We introduce virtual twin groups $VT_n$ and welded twin groups $WT_n$ in analogy with virtual and welded braid groups. By  \emph{virtual twin group} $VT_n$ we mean the group generated by elements
$$
s_1, s_2, \ldots, s_{n-1}, \rho_1, \rho_2, \ldots, \rho_{n-1},
$$
with defining relations (\ref{r1})--(\ref{r2}) and (\ref{r3})--(\ref{r7}), where
\begin{align}
  \rho_{i}^{2} &= 1 ~~ \hspace{3,5cm} \mbox{for}~~ i = 1, 2, \ldots,n-1,\label{r3}\\
 \rho_{i}\rho_{j} &= \rho_{j}\rho_{i} ~~      \hspace{3cm} \mbox{for}~~ |i-j|\geq 2,\label{r4}\\
 \rho_{i}\rho_{i+1}\rho_{i} &= \rho_{i+1}\rho_{i}\rho_{i+1}~~ \hspace{2cm} \mbox{for}~~ i = 1, 2 \ldots,n-2,\label{r5}\\
 s_{i} \rho_{j}& = \rho_{j} s_{i} ~~      \hspace{3cm} \mbox{for}~~ |i-j|\geq 2,\label{r6}\\
 \rho_{i}\rho_{i+1}s_{i} &= s_{i+1}\rho_{i}\rho_{i+1}~~ \hspace{2cm} \mbox{for}~~ i = 1, 2, \ldots,n-2.\label{r7}
 \end{align}

The quotient of $VT_n$ by additional relations
\begin{align}
\rho_{i} s_{i+1}  s_{i} &=s_{i+1} s_i \rho_{i+1}  \hspace{0,3cm} \mbox{for}~~ i = 1, 2,\ldots,n-2,\label{r9}
 \end{align}
is called the \emph{welded twin group}  and denoted by $WT_n$. Notice that the second forbidden relation $s_i s_{i+1} \rho_i = \rho_{i+1} s_i s_{i+1}$ also holds in $WT_n$.
\par

The kernel of the homomorphism
$$
VT_n \longrightarrow S_n,
$$
which maps $s_i$ to $\rho_i$ and $\rho_i$ to $\rho_i$ for $i=1,2, \ldots, n-1$ is the \emph{pure virtual twin group} $PVT_n$. This homomorphism induces the homomorphism
$$
WT_n \longrightarrow S_n
$$
whose kernel is the \emph{pure welded twin group} $PWT_n$.
\par

\begin{prop} The following holds:
\begin{enumerate}
\item  $VT_n = PVT_n \leftthreetimes S_n$.
\item $WT_n = PWT_n \leftthreetimes S_n$.
\item $T_n$ is a subgroup of $VT_n$.
\end{enumerate}
\end{prop}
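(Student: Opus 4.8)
The plan is to handle (1) and (2) uniformly by the standard ``split exact sequence'' argument, and to prove (3) by mapping $VT_n$ into a right-angled Coxeter group extended by $S_n$, inside which $T_n$ is manifestly embedded.

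For (1), first I would check that $s_i\mapsto(i,i+1)$, $\rho_i\mapsto(i,i+1)$ defines a homomorphism $\pi\colon VT_n\to S_n$. This is routine: each defining relation \eqref{r1}--\eqref{r7} maps to a valid identity among transpositions, the only nonobvious case being \eqref{r7}, which becomes the braid relation $(i,i{+}1)(i{+}1,i{+}2)(i,i{+}1)=(i{+}1,i{+}2)(i,i{+}1)(i{+}1,i{+}2)$. By definition $PVT_n=\ker\pi$, and $\pi$ is surjective, so we have $1\to PVT_n\to VT_n\xrightarrow{\pi}S_n\to 1$. Next I would produce a splitting: since the $\rho_i$ satisfy precisely the Coxeter relations \eqref{r3}--\eqref{r5} of $S_n$, the assignment $\iota\colon(i,i+1)\mapsto\rho_i$ gives a well-defined homomorphism $S_n\to VT_n$ with $\pi\circ\iota=\mathrm{id}$. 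Hence $VT_n=PVT_n\leftthreetimes S_n$. For (2) the only additional point is that the extra relation \eqref{r9} of $WT_n$ also maps to a valid identity under $\pi$ (again the braid relation), so $\pi$ descends to $\overline{\pi}\colon WT_n\to S_n$ with $\ker\overline{\pi}=PWT_n$; the same section $\iota$ survives in the quotient because the $\rho_i$ still satisfy \eqref{r3}--\eqref{r5}, giving $WT_n=PWT_n\leftthreetimes S_n$.

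For (3), the natural map $h\colon T_n\to VT_n$, $s_i\mapsto s_i$, is well defined because the $s_i\in VT_n$ satisfy \eqref{r1}--\eqref{r2}; the real task is injectivity of $h$. The plan is to construct a target in which $T_n$ is visibly embedded. Let $\Gamma$ be the right-angled Coxeter group on the $\binom{n}{2}$ generators $t_{\{a,b\}}$ ($1\le a<b\le n$), with $t_{\{a,b\}}^2=1$ and $t_{\{a,b\}}t_{\{c,d\}}=t_{\{c,d\}}t_{\{a,b\}}$ whenever $\{a,b\}\cap\{c,d\}=\emptyset$. The group $S_n$ acts on $\Gamma$ by permuting indices, $\sigma\cdot t_{\{a,b\}}=t_{\{\sigma(a),\sigma(b)\}}$, so we may form $\Gamma\leftthreetimes S_n$. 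I would then define $g\colon VT_n\to\Gamma\leftthreetimes S_n$ by $s_i\mapsto(t_{\{i,i+1\}},1)$ and $\rho_i\mapsto(1,(i,i+1))$ and verify the relations. Most are immediate; the crux is \eqref{r6} and \eqref{r7}: for \eqref{r6} one uses that $(j,j+1)$ fixes $\{i,i+1\}$ when $|i-j|\ge2$, and for \eqref{r7} that $(i,i+1)(i+1,i+2)$ carries the pair $\{i,i+1\}$ to $\{i+1,i+2\}$, exactly matching the index shift from $s_i$ to $s_{i+1}$ on the right-hand side.

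Finally I would extract injectivity of $h$. Since $g(s_i)=(t_{\{i,i+1\}},1)$, the composite $g\circ h$ lands in the normal factor $\Gamma\times\{1\}$ and, after the identification $\Gamma\times\{1\}\cong\Gamma$, equals the homomorphism $\iota_0\colon T_n\to\Gamma$, $s_i\mapsto t_{\{i,i+1\}}$. It remains to see $\iota_0$ is injective, and here I would exhibit an explicit retraction $q\colon\Gamma\to T_n$ with $q(t_{\{i,i+1\}})=s_i$ and $q(t_{\{a,b\}})=1$ for $b>a+1$: this respects the relations of $\Gamma$ (two disjoint adjacent pairs $\{i,i+1\},\{j,j+1\}$ force $|i-j|\ge2$, so their images $s_i,s_j$ commute by \eqref{r2}, while any non-adjacent generator maps to $1$), and $q\circ\iota_0=\mathrm{id}_{T_n}$. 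Thus $\iota_0$, hence $g\circ h$, hence $h$, is injective, so $T_n$ is a subgroup of $VT_n$. I expect the main obstacle to be the bookkeeping needed to confirm \eqref{r7} under the $S_n$-action on $\Gamma$; everything else is formal. (Alternatively, injectivity of $\iota_0$ follows from the classical fact that a standard parabolic subgroup of a Coxeter group is the Coxeter group on the induced generating set.)
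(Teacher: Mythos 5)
Your proofs of (1) and (2) are correct and coincide with the paper's, which simply observes that the two homomorphisms onto $S_n$ are split surjections, the section being $(i,i+1)\mapsto\rho_i$ exactly as you describe. For (3) you take a genuinely different, and in fact more robust, route. The paper's argument is to consider the endomorphism of $VT_n$ sending $\rho_i\mapsto 1$ and $s_i\mapsto s_i$ and to note that its image is $T_n$; but that assignment does not respect the mixed relation \eqref{r7}, whose image would read $s_i=s_{i+1}$, so the claimed retraction onto $\langle s_1,\ldots,s_{n-1}\rangle$ is not well defined as stated (and even granting it, one would still have to argue that this subgroup carries no relations beyond those of $T_n$). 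Your construction supplies what is missing: you map $VT_n$ into $\Gamma\leftthreetimes S_n$, where $\Gamma$ is the right-angled Coxeter group on unordered pairs with the $S_n$-action permuting indices, verify all of \eqref{r1}--\eqref{r7} (the only delicate case being \eqref{r7}, where $(i,i+1)(i+1,i+2)$ carries $\{i,i+1\}$ to $\{i+1,i+2\}$, matching the shift from $s_i$ to $s_{i+1}$), and then conclude injectivity of $s_i\mapsto t_{\{i,i+1\}}$ either from the standard-parabolic fact for Coxeter groups or from your explicit retraction $q$. The cost is the extra bookkeeping with the $S_n$-action on $\Gamma$; the gain is an argument that actually establishes the embedding $T_n\hookrightarrow VT_n$ and, as a bonus, realises $VT_n$ inside a concrete group.
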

\par

\begin{proof}
The preceding homomorphisms are split-surjections establishing the assertions (1) and (2). Consider the  endomorphism of $VT_n$ defined on the generators by
$$
\rho_i \mapsto 1~\textrm{and}~ s_i \mapsto s_i
$$
for $ i = 1, 2, \ldots, n-1$. The image of this endomorphism is $T_n$, which proves (3).
\end{proof}
\par

\begin{problem}
We conclude with the following problems:
\begin{enumerate}
\item Find presentations for the groups $PVT_n$ and $PWT_n$.
\item Find presentations for the commutator subgroups of $VT_n$ and $WT_n$. The commutator subgroup of $T_n$ has been investigated in \cite{DG}.
\item Determine whether $PVT_n$ and $PWT_n$ are residually nilpotent (residually finite).
\item Determine whether $VT_n$ and $WT_n$ are linear.
\end{enumerate}
\end{problem}
\par

\begin{ack}
After this paper was submitted to the journal and uploaded on the arxiv, we were informed by Harshman and Knapp about their preprint \cite{HK}. They refer twin and pure twin groups as triad and pure triad groups, respectively, and explore interesting relations of these groups with three-body hard-core interactions in one dimension. Bardakov and Vesnin are supported by the Russian Science Foundation grant 16-41-02006. Singh is supported by the DST-RSF grant INT/RUS/RSF/P-2 and SERB MATRICS Grant MTR/2017/000018.
\end{ack}
\par

\end{document}